\DeclareMathOperator{\aut}{Aut}
\DeclareMathOperator{\diag}{Diag}
\DeclareMathOperator{\cay}{Cay}
\DeclareMathOperator{\cyc}{Cyc}
\DeclareMathOperator{\iso}{Iso}
\DeclareMathOperator{\orb}{Orb}
\DeclareMathOperator{\rk}{rk}
\DeclareMathOperator{\Span}{Span}
\DeclareMathOperator{\sym}{Sym}
\DeclareMathOperator{\rad}{rad}
\title[$CI$-property for decomposable Schur rings]{$CI$-property for decomposable Schur rings over an abelian group}
\author{Istv\'an Kov\'acs}
\address{University of Primorska, Koper, Slovenia}
\email{istvan.kovacs@upr.si}
\author{Grigory Ryabov}
\address{Sobolev Institute of Mathematics, Novosibirsk, Russia}
\address{Novosibirsk State University, Novosibirsk, Russia}
\email{gric2ryabov@gmail.com}
\thanks{I.~Kov\'acs was supported in part by 
the Slovenian Research Agency (research program P1-0285 and research projects N1-0032, N1-0038, N1-0062, J1-7051 and J1-9108). G.~Ryabov was supported in part by RSF (project No. 14-21-00065), and is grateful to the University 
of Primorska for hospitality.}
\date{}
\newtheorem{prop}{Proposition}[section]
\newtheorem*{theo1}{Theorem 1}
\newtheorem{lemm}[prop]{Lemma}
\theoremstyle{definition}
\begin{document}

\begin{abstract}
A Schur ring over a finite group is said to be \emph{decomposable} if it is the generalized wreath product of  Schur rings over smaller groups. In this paper we establish a sufficient condition for a decomposable Schur ring over the direct product of elementary abelian groups to be a $CI$-Schur ring. By using this condition we reprove in a short way known results on the $CI$-property for decomposable Schur rings  over  an elementary abelian group of rank at most~$5$.
\\
\\
\textbf{Keywords}: Isomorphisms, $CI$-groups, Schur rings.
\\
\textbf{MSC}: 05C25, 05C60, 20B25.
\end{abstract}

\maketitle

\section{Introduction}

Let $G$ be a finite group. An  \emph{$S$-ring} (a \emph{Shur ring}) over  $G$ is defined to be a subring of the integer group ring $\mathbb{Z}G$ which is a free $\mathbb{Z}$-module spanned by a partition of $G$ closed under taking inverse and containing the identity element of $G$ as a class (the exact definition is given in Section~2). The concept of an $S$-ring goes back to Schur and Wielandt who studied a permutation group containing a regular subgroup~\cite{Schur,Wi}. An $S$-ring over~$G$ is
called \emph{schurian} if it is associated in a natural way with a subgroup of $\sym(G)$ that contains all right translations.

Let $\mathcal{A}$ and $\mathcal{A}^{'}$ be $S$-rings over groups $G$  and  $G^{'},$ respectively. A \emph{(combinatorial) isomorphism } from $\mathcal{A}$ to $\mathcal{A}^{'}$  is defined to be a bijection  from $G$ to $G^{'}$  that is an isomorphism of the corresponding Cayley schemes $\mathcal{C}(\mathcal{A})$ and $\mathcal{C}(\mathcal{A}^{'})$. Set 
$$\iso(\mathcal{A})=\{f\in \sym(G):~\text{f is an isomorphism from}~\mathcal{A}~\text{onto}~\text{$S$-ring over}~G\}.$$
An isomorphism from $\mathcal{A}$ onto itself is called an \emph{automorphism} of $\mathcal{A}$ if it preserves every basic relation of $\mathcal{C}(\mathcal{A})$. All automorphisms of $\mathcal{A}$ form a group called the \emph{automorphism group} of $\mathcal{A}$ and denoted by $\aut(\mathcal{A})$. An $S$-ring $\mathcal{A}$ is called a \emph{$CI$-$S$-ring} if $\iso(\mathcal{A})=\aut(\mathcal{A})\aut(G)$. This definition was suggested by Hirasaka and Muzychuk in~\cite{HM}. An importance of $CI$-$S$-rings arises from the following result of this paper: if every schurian $S$-ring over a group $G$ is a $CI$-$S$-ring then $G$ is a $DCI$-group.

Recall that a set $S\subseteq G$ is  a \emph{$CI$-subset} if for every $T\subseteq G$ the isomorphism of Cayley graphs $\cay(G,S)$ and $\cay(G,T)$ implies that $T=S^{\varphi}$ for some $\varphi \in \aut(G)$. A group $G$ is said to be a \emph{$DCI$-group} if each of its subsets is a $CI$-subset and $G$ is said to be a \emph{$CI$-group} if each of its inverse-closed subsets is a $CI$-subset. One can check that a subgroup of a $DCI$-group ($CI$-group) is also a $DCI$-group ($CI$-group). In~\cite{Adam}, \'Ad\'am conjectured that every cyclic group is a $CI$-group. However, this conjecture turned  out
to be false. In~\cite{BF}, Babai and Frankl asked the following question: which are the $CI$-groups? %The 
 Most of the results on $DCI$- and $CI$-groups can be found in the survey paper~\cite{Li}.
%It should be mentioned that the classes of $DCI$- and $CI$-groups are closed under taking subgroups.

Denote the cyclic group of order~$n$ by $C_n$. From~\cite[Theorem~8.8]{Li}  it follows that every Sylow subgroup of an abelian $DCI$-group is elementary abelian or isomorphic to $C_4$. %One of the main steps towards the classification of all $CI$-groups is the classification of all elementary abelian $CI$-groups (\cite[Question 8.3]{Li}). 
Let $p$ and $q$ be distinct primes. The following abelian groups are $DCI$-groups: $C_p$~\cite{ET}; $C_p^2$, $C_p^3$~\cite{AlN}; $C_2^4$, $C_2^5$~\cite{CLi}; $C_p^4$, where $p$ is odd~\cite{HM}; $C_p^5$, where $p$ is odd \cite{FK}; $C_k$, $C_{2k}$, $C_{4k}$, where $k$ a square-free odd number~\cite{M}; $C_p^2\times C_q$~\cite{MK}. On the other hand, the following groups are not $CI$-groups: $C_2^n$ for $n\geq 6$~\cite{Now}; $C_3^n$ for $n\geq 8$~\cite{Sp1}; $C_p^n$ for $n\geq 2p+3$~\cite{Som}. 

The proof of the fact that the group $G=C_p^n$, where $p$ is an odd prime and $n\in\{4,5\}$, is a $DCI$-group is based on the method of $S$-rings. In fact, in this proof it was checked that every schurian $S$-ring over $G$ is a $CI$-$S$-ring. Due to the result of Hirasaka and Muzychuk, this is sufficient for  the proof that $G$ is a $DCI$-group. One of the main difficulties here was to check that every decomposable schurian $S$-ring over $G$ is a $CI$-$S$-ring. Recall that an $S$-ring $\mathcal{A}$ is said to be \emph{decomposable} if it is the $U/L$-wreath product of $S$-rings $\mathcal{A}_U$ and $\mathcal{A}_{G/L}$ for some $\mathcal{A}$-section $U/L$ of $G$ with 
%the nontrivial $L$ 
 $1 < |L|$ and $U<G$ (see Subsection~2.1 for exact definitions). The main goal of this paper is to find a sufficient condition for a decomposable $S$-ring over the direct product of elementary abelian groups to be a $CI$-$S$-ring, and give short proofs of the known results on the $CI$-property for decomposable $S$-rings over an elementary abelian group of rank at most~$5$.

Again, let $G$ and $G^{'}$ be finite groups. For a set $\Delta\subseteq \sym(G)$ and a section $S=U/L$ of~$G$ we set $\Delta^S=\{f^S:~f\in \Delta,~S^f=S\}$, where $S^f=S$ means that $f$  maps 
$U$ to itself and it permutes the $L$-cosets in $U,$ and $f^S$ denotes the bijection of $S$ induced by $f$. 
Let $\mathcal{A}$ be an $S$-ring over~$G$. Put $\aut_G(\mathcal{A})=\aut(\mathcal{A})\cap \aut(G)$. For every $\mathcal{A}$-section $S$ of $G$ one can from the \emph{quotient} $S$-ring $\mathcal{A}_S$ over $S$ (see Subsection~2.1). Each $f\in \aut_G(\mathcal{A})$ induces %the 
 a combinatorial automorphism of $\mathcal{A}_S,$ 
%and
 which is also a %the
group automorphism of $S$. So $\aut_G(\mathcal{A})^S\leq\aut_S(\mathcal{A}_S)$. The main result of the paper is given in the theorem below.

\begin{theo1}\label{main}
Let $G$ be %the
 a direct product of elementary abelian groups, $\mathcal{A}$  be an $S$-ring over $G$, and 
$S=U/L$  be an 
$\mathcal{A}$-section of $G$. Suppose that $\mathcal{A}$ is the nontrivial $S$-wreath product and the $S$-rings $\mathcal{A}_U$ and $\mathcal{A}_{G/L}$ are $CI$-$S$-rings. Then $\mathcal{A}$ is a $CI$-$S$-ring whenever 
$$\aut_{S}(\mathcal{A}_{S})=\aut_U(\mathcal{A}_U)^{S}\aut_{G/L}(\mathcal{A}_{G/L})^{S}.~\eqno(1)$$ 
In particular,  $\mathcal{A}$ is a $CI$-$S$-ring if $\aut_{S}(\mathcal{A}_{S})=\aut_U(\mathcal{A}_U)^{S}$ or $\aut_{S}(\mathcal{A}_{S})=\aut_{G/L}(\mathcal{A}_{G/L})^{S}$.
\end{theo1}

We do not know whether the Condition~(1) is a necessary condition for an $S$-ring to be a $CI$-$S$-ring. If $U=L$ then, obviously, $\aut_{S}(\mathcal{A}_{S})$ is trivial and Condition~(1) holds. So Theorem~\ref{main} is a criterion for the groups $C_p^2$ and $C_p\times C_q$, where $p$ and $q$ are distinct primes, because in these cases $U$ must coincide with $L$. The computer calculations made by 
%using
~\cite{GAP} %implies 
 shows that Theorem~\ref{main} is a criterion for the groups $C_2^3$, $C_3^3$, $C_2^2\times C_3$, and $C_2\times C_3^2$.

The %text of the 
paper is organized in the following way. Section~2 contains a background of $S$-rings, especially, isomorphisms of $S$-rings, $p$- and $CI$-$S$-rings. In Section~3 we prove Theorem~\ref{main}. In Section~4 we give some corollaries of Theorem~\ref{main}. Finally, in Section~5 we use Theorem~\ref{main} to check the $CI$-property for decomposable $S$-rings over an elementary abelian group of rank at most~$5$.

The authors would like to thank the anonymous referee  for his constructive comments which helped us to improve the text significantly. 

\section{Preliminaries}

In this section we present some facts and definitions concerned with $S$-rings, %. The 
most of them can be  found in~\cite{FK,MP}. Throughout this section $G$ is a finite group and $e$ is the identity of $G$. The set of all orbits of a group $K$ acting on a set $\Omega$ is denoted by $\orb(K,\Omega)$.

\subsection{$S$-rings: basic facts and definitions}

Let  $\mathbb{Z}G$ be the integer group ring of~$G$. If $X\subseteq G$ then  denote the sum $\sum_{x\in X} {x}$  by $\underline{X}$. The set $\{x^{-1}:x\in X\}$ is denoted by $X^{-1}$. A subring  $\mathcal{A}\subseteq \mathbb{Z} G$ is called an \emph{$S$-ring} over $G$ if there exists a partition $\mathcal{S}(\mathcal{A})$ of~$G$ such that:

$(1)$ $\{e\}\in\mathcal{S}(\mathcal{A})$,

$(2)$  if $X\in\mathcal{S}(\mathcal{A})$ then $X^{-1}\in\mathcal{S}(\mathcal{A})$,

$(3)$ $\mathcal{A}=\Span_{\mathbb{Z}}\{\underline{X}:\ X\in\mathcal{S}(\mathcal{A})\}$.\\
The elements of $\mathcal{S}(\mathcal{A})$ are called the \emph{basic sets} of  $\mathcal{A}$ and the number $|\mathcal{S}(\mathcal{A})|$ is called the \emph{rank} of  $\mathcal{A}$. If $X,Y,Z\in\mathcal{S}(\mathcal{A})$ then   the number of distinct representations of $z\in Z$ in the form $z=xy$ with $x\in X$ and $y\in Y$ is denoted by $c^Z_{X,Y}$. Note that if $X$ and $Y$ are basic sets of $\mathcal{A}$ then $\underline{X}~\underline{Y}=\sum_{Z\in \mathcal{S}(\mathcal{A})}c^Z_{X,Y}\underline{Z}$. Therefore, the numbers  $c^Z_{X,Y}$ are the structure constants of $\mathcal{A}$ with respect to the basis $\{\underline{X}:\ X\in\mathcal{S}\}$. It is easy to check that given basic  sets $X$ and $Y$ the set $XY$ is also basic whenever  $|X|=1$ or $|Y|=1$.

Let $K$ be a subgroup of $\sym(G)$ containing  the group of right translations $G_{right}=\{x\mapsto xg,~x\in G:g\in G\}$. Let $K_e$ stand for the stabilizer of $e$ in $K$. Schur proved in  \cite{Schur} that the $\mathbb{Z}$-submodule
$$V(K,G)=\Span_{\mathbb{Z}}\{\underline{X}:~X\in \orb(K_e,~G)\},$$
is an $S$-ring over $G$. An $S$-ring $\mathcal{A}$ over  $G$ is called \emph{schurian} if $\mathcal{A}=V(K,G)$ for some $K$ such that $G_{right}\leq K \leq \sym(G)$. It should be mentioned that not every $S$-ring is schurian (see~\cite[Theorem~25.7]{Wi}).

Let $\mathcal{A}$ be an $S$-ring over $G$. A set $X \subseteq G$ is called an \emph{$\mathcal{A}$-set} if $\underline{X}\in \mathcal{A}$. A subgroup $H \leq G$ is called an \emph{$\mathcal{A}$-subgroup} if $H$ is an $\mathcal{A}$-set. Let $L \unlhd U\leq G$. %A 
 The section $U/L$ of $G$ is called an \emph{$\mathcal{A}$-section} if $U$ and $L$ are $\mathcal{A}$-subgroups. If $S=U/L$ is an $\mathcal{A}$-section of $G$ then the module
$$\mathcal{A}_S=Span_{\mathbb{Z}}\left\{\underline{X}^{\pi}:~X\in\mathcal{S}(\mathcal{A}),~X\subseteq U\right\},$$
where $\pi:U\rightarrow U/L$ is the canonical epimorphism, is an $S$-ring over $S$ called the \emph{quotient} $S$-ring. If $\mathcal{A}=V(K,G)$ for some $K\leq \sym(G)$ containig $G_{right}$ and $S$ is an $\mathcal{A}$-section of $G$ then $\mathcal{A}_S=V(K^S,G)$.

If $X \subseteq G$ then the set $\{g\in G:~Xg=gX=X\}$ is called the \emph{radical} of $X$ and denoted by $\rad(X)$. Clearly, $\rad(X)$ is a subgroup of $G$. If $X$ is an $\mathcal{A}$-set  then the groups $\langle X \rangle$ and $\rad(X)$ are $\mathcal{A}$-subgroups of~$G$. By the \emph{thin radical} of the $S$-ring $\mathcal{A}$ we mean the set 
$$O_{\theta}(\mathcal{A})=\{g\in G:~\{g\}\in \mathcal{S}(\mathcal{A})\}.$$
It is easy to see that $O_{\theta}(\mathcal{A})$ is an $\mathcal{A}$-subgroup.

Given $X \subseteq G$ and $m\in \mathbb{Z}$ put $X^{(m)}=\{x^m: x \in X\}$. The following statement is known  as Schur's  theorem on multipliers (see~\cite{Schur}).

\begin{lemm} \label{burn}
Let $\mathcal{A}$ be an $S$-ring over an abelian group  $G$. Then  $X^{(m)}\in \mathcal{S}(\mathcal{A})$  for every  $X\in \mathcal{S}(\mathcal{A})$ and every  $m\in \mathbb{Z}$ coprime to $|G|$.
\end{lemm}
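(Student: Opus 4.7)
The plan is to reduce to the case when $m=p$ is a prime with $\gcd(p,|G|)=1$, and then use a Frobenius-type congruence modulo $p$ in $\mathbb{Z}G$. For the reduction, note that in the abelian group $G$ we have $(X^{(m_1)})^{(m_2)}=X^{(m_1m_2)}$, so if the claim holds for every prime divisor of $m$ then it holds for $m$ itself; and an arbitrary $m$ coprime to $|G|$ can be replaced by $m+k|G|$ (which acts identically on $G$) to arrange that only primes coprime to $|G|$ appear. So fix a prime $p$ with $\gcd(p,|G|)=1$ and a basic set $X\in\mathcal{S}(\mathcal{A})$.

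The key identity is obtained by working modulo $p$ in the commutative ring $\mathbb{Z}G$. Since $G$ is abelian and $p$ is prime, the Freshman's dream together with Fermat's little theorem on the integer coefficients (which are all $1$) yields
$$\underline{X}^{\,p}\ \equiv\ \sum_{x\in X} x^p\ \pmod{p\mathbb{Z}G}.$$
Because $\gcd(p,|G|)=1$, the power map $\tau_p\colon g\mapsto g^p$ is a bijection of $G$, so the elements $x^p$ for $x\in X$ are pairwise distinct and the right-hand side is exactly $\underline{X^{(p)}}$. On the other hand, $\underline{X}^{\,p}\in\mathcal{A}$, so we may write it as $\sum_{Y\in\mathcal{S}(\mathcal{A})}c_Y\underline{Y}$ with $c_Y\in\mathbb{Z}_{\geq 0}$. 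Combining,
$$\underline{X^{(p)}}\ \equiv\ \sum_{Y\in\mathcal{S}(\mathcal{A})} c_Y\,\underline{Y}\pmod{p\mathbb{Z}G}.$$

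Now compare coefficients. For a fixed basic set $Y$ and any $g\in Y$, the coefficient of $g$ on the left is $[g\in X^{(p)}]\in\{0,1\}$, while on the right it is $c_Y\pmod p$. Since this must give the same residue for every $g\in Y$, either every $g\in Y$ lies in $X^{(p)}$ or none does. Hence $X^{(p)}$ is a disjoint union of basic sets of $\mathcal{A}$. Finally, to see that $X^{(p)}$ is a \emph{single} basic set, observe that $\tau_p$ is a bijection of $G$, so the family $\{X^{(p)}:X\in\mathcal{S}(\mathcal{A})\}$ is a partition of $G$ with $|\mathcal{S}(\mathcal{A})|$ parts; since each such part is a union of basic sets of $\mathcal{A}$, this partition coarsens $\mathcal{S}(\mathcal{A})$, and a coarsening with the same number of parts must coincide with it. Therefore $X^{(p)}\in\mathcal{S}(\mathcal{A})$, completing the proof.

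The only subtle step is the passage from ``$\underline{X^{(p)}}\in\mathcal{A}$'' to ``$X^{(p)}$ is a single basic set'': the Frobenius congruence by itself only shows membership in $\mathcal{A}$, and one must invoke the bijectivity of $\tau_p$ to upgrade this to the statement that $X^{(p)}$ is basic.
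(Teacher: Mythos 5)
Your proof is correct and complete: the reduction to prime exponents, the Frobenius congruence $\underline{X}^{\,p}\equiv\underline{X^{(p)}}\pmod{p\mathbb{Z}G}$, and the final counting argument showing that the coarsening $\{X^{(p)}\}$ with the same number of parts must equal $\mathcal{S}(\mathcal{A})$ are all sound. The paper gives no proof of this lemma (it simply cites Schur), and your argument is precisely the classical proof of Schur's theorem on multipliers as found in Wielandt's book, so there is nothing to compare beyond noting that you have correctly reconstructed the standard argument, including the subtle last step that many sketches omit.
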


The $S$-ring $\mathcal{A}$ over  $G$ is said to be \emph{cyclotomic} if there exists $M\le\aut(G)$ such that $\mathcal{S}(\mathcal{A})=\orb(M,G)$. In this case $\mathcal{A}$ is denoted by $\cyc(M,G)$. Obviously, $\mathcal{A}=V(G_{right}M,G)$. So every cyclotomic $S$-ring is schurian. If $\mathcal{A}=\cyc(M,G)$ for some $M\leq \aut(G)$ and $S$ is an $\mathcal{A}$-section of $G$ then $\mathcal{A}_S=\cyc(M^S,G)$.

Let $S=U/L$ be an $\mathcal{A}$-section of~$G$. The $S$-ring~$\mathcal{A}$ is called the \emph{$S$-wreath product} if $L\trianglelefteq G$ and $L\leq\rad(X)$ for all basic sets $X$ outside~$U$. In this case we write $\mathcal{A}=\mathcal{A}_U\wr_{S}\mathcal{A}_{G/L}$ and omit $S$ when $U=L$. An $S$-ring $\mathcal{A}$ is said to be the \emph{generalized wreath product} if $\mathcal{A}$ is the $S$-wreath product for some $\mathcal{A}$-section $S$ of $G$. The construction of the generalized wreath product for $S$-rings was introduced  in~\cite{EP7}. The $S$-wreath product is called \emph{nontrivial} or \emph{proper}  if $ \{e\} \neq L$ and $U\neq G$.  Note that  $\mathcal{A}$ can be reconstructed uniquely from the $S$-rings $\mathcal{A}_U$ and $\mathcal{A}_{G/L}$. We say that the $S$-ring $\mathcal{A}$ is \emph{decomposable} if $\mathcal{A}$ is the nontrivial $S$-wreath product for some $\mathcal{A}$-section $S$ of $G$ and  $\mathcal{A}$ is \emph{indecomposable} otherwise. Throughout the paper we consider only nontrivial generalized wreath products and further we will avoid the word ``nontrivial'' for short. 

If  $\mathcal{A}_1$ and $\mathcal{A}_2$ are $S$-rings over groups $G_1$ and $G_2,$ respectively, then the set
	
	$$\mathcal{S}=\mathcal{S}(\mathcal{A}_1)\times \mathcal{S}(\mathcal{A}_2)=\{X_1\times X_2:~X_1\in \mathcal{S}(\mathcal{A}_1),~X_2\in \mathcal{S}(\mathcal{A}_2)\} $$
forms a partition of  $G=G_1\times G_2$ that defines an  $S$-ring over $G$. This $S$-ring is called the  \emph{tensor product}  of $\mathcal{A}_1$ and $\mathcal{A}_2$ and denoted by $\mathcal{A}_1 \otimes \mathcal{A}_2$.

\begin{lemm}{\em\cite[Lemma 2.8]{FK}}\label{tenspr}
Let $\mathcal{A}$ be an $S$-ring over an abelian group $G=G_1\times G_2$. Assume that $G_1$ and $G_2$ are $\mathcal{A}$-groups. Then $\mathcal{A}=\mathcal{A}_{G_1}\otimes \mathcal{A}_{G_2}$ whenever $\mathcal{A}_{G_1}$ or $\mathcal{A}_{G_2}$ is the group ring.
\end{lemm}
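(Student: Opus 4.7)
The plan is to show that under the hypothesis $\mathcal{A}_{G_1}=\mathbb{Z}G_1$ every basic set of $\mathcal{A}$ has the form $\{g_1\}\times Y$ with $g_1\in G_1$ and $Y\in \mathcal{S}(\mathcal{A}_{G_2})$, which is precisely the description of the basic sets of $\mathcal{A}_{G_1}\otimes\mathcal{A}_{G_2}$. The case $\mathcal{A}_{G_2}=\mathbb{Z}G_2$ will then follow by interchanging the roles of $G_1$ and $G_2$.

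The first step is to note that the hypothesis $\mathcal{A}_{G_1}=\mathbb{Z}G_1$ means every singleton $\{g_1\}$ with $g_1\in G_1$ is a basic set of $\mathcal{A}_{G_1}$, and hence of $\mathcal{A}$ itself, since $G_1$ is an $\mathcal{A}$-subgroup and so the basic sets of $\mathcal{A}_{G_1}$ are exactly the basic sets of $\mathcal{A}$ that happen to lie in $G_1$. Similarly, because $G_2$ is an $\mathcal{A}$-subgroup, every $Y\in\mathcal{S}(\mathcal{A}_{G_2})$ is a basic set of $\mathcal{A}$. The preliminaries record the fact that the product of two basic sets is again basic whenever one of the factors is a singleton; applying this to the singleton basic set $\{(g_1,e_2)\}$ and the basic set $Y$ yields that
$$\{(g_1,e_2)\}\cdot Y \;=\; \{g_1\}\times Y$$
is a basic set of $\mathcal{A}$ for every $g_1\in G_1$ and every $Y\in\mathcal{S}(\mathcal{A}_{G_2})$.

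The last step is a clean partition argument. As $g_1$ ranges over $G_1$ and $Y$ ranges over $\mathcal{S}(\mathcal{A}_{G_2})$, the sets $\{g_1\}\times Y$ form a partition of $G=G_1\times G_2$, since $\mathcal{S}(\mathcal{A}_{G_2})$ partitions $G_2$. But the basic sets of $\mathcal{A}$ also partition $G$, and the $\{g_1\}\times Y$ are among them; a partition cannot be strictly contained in another partition of the same underlying set, so the two partitions coincide. Hence $\mathcal{S}(\mathcal{A})=\{\{g_1\}\times Y:g_1\in G_1,\ Y\in\mathcal{S}(\mathcal{A}_{G_2})\}$, which is exactly $\mathcal{S}(\mathcal{A}_{G_1}\otimes\mathcal{A}_{G_2})$, proving the lemma. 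To the extent that there is a real obstacle here, it is only in spotting the right use of the hypothesis: the condition $\mathcal{A}_{G_1}=\mathbb{Z}G_1$ is precisely what makes the elements $\{(g_1,e_2)\}$ available as singleton basic sets, and once these are in hand the ``singleton times basic is basic'' rule produces all of the expected rectangular basic sets in one stroke.
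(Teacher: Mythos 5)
Your proof is correct. Note that the paper does not actually prove this lemma --- it is imported verbatim from \cite[Lemma 2.8]{FK} --- so there is no in-text argument to compare against; judged on its own, your reasoning is sound and uses only facts recorded in the paper's preliminaries. The three ingredients all hold up: since $G_1$ is an $\mathcal{A}$-subgroup and the quotient by the trivial group is just the restriction, $\mathcal{S}(\mathcal{A}_{G_1})$ consists exactly of the basic sets of $\mathcal{A}$ contained in $G_1$, so $\mathcal{A}_{G_1}=\mathbb{Z}G_1$ does put every singleton of $G_1$ into $\mathcal{S}(\mathcal{A})$; the ``singleton times basic set is basic'' fact from Subsection~2.1 then makes each $g_1Y=\{g_1\}\times Y$ a basic set; and the final partition argument is airtight, since a subfamily of the partition $\mathcal{S}(\mathcal{A})$ whose union is all of $G$ leaves no room for further (nonempty) classes. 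This is essentially the standard argument for this statement, and it is the natural one given that the paper explicitly records the singleton-product fact in its preliminaries.
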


\subsection{Isomorphisms of $S$-rings}
Throughout this subsection we follow~\cite{EP4,MP}.

Let $\mathcal{R}$ be a partition of $G\times G$. A pair $\mathcal{C}=\left(G,\mathcal{R}\right)$ is called a \emph{Cayley scheme} over $G$ if the following properties hold:

$(1)$ $\diag(G\times G)=\{(g,g):g\in G\}\in\mathcal{R}$;

$(2)$ if  $R\in\mathcal{R}$ then $R^*=\{(h,g): (g,h)\in R\}\in\mathcal{R}$;

$(3)$ if $R,~S,~T\in\mathcal{R}$ then the number $c^T_{R,S}=|\{h\in G:(g,h)\in R,~(h,f)\in S\}|$ does not depend on the choice of  $(g,f)\in T$;

$(4)$ $\{(hg,fg):(h,f)\in R\}=R$ for every $R\in\mathcal{R}$ and every $g\in G$.

There is a one-to-one correspondence between  $S$-rings and Cayley schemes over $G$. If $\mathcal{A}$ is an $S$-ring over $G$ then the  pair $\mathcal{C}(\mathcal{A})=\left(G,\mathcal{R}(\mathcal{A})\right)$, where $\mathcal{R}(\mathcal{A})=\{R(X):X\in \mathcal{S}(\mathcal{A})\}$ with $R(X)=\{(g,xg): g\in G, x\in X\}$, is a Cayley scheme over~$G$.

Let  $\mathcal{A}$  and $\mathcal{A}^{'}$ be $S$-rings over groups $G$  and $G^{'},$ respectively, and $\mathcal{C}=(G,\mathcal{R})$ and $\mathcal{C}^{'}=(G^{'},\mathcal{R}^{'})$  Cayley schemes over $G$ and $G^{'},$ respectively. A \emph{(combinatorial) isomorphism} from  $\mathcal{C}$ to  $\mathcal{C}^{'}$  is defined to be a bijection $f:G\rightarrow G^{'}$ such that $\mathcal{R}^{'}=\mathcal{R}^f$, where $\mathcal{R}^f=\{R^f:~R\in\mathcal{R}\}$ with $R^f=\{(g^f,~h^f):~(g,~h)\in R\}$. A \emph{(combinatorial) isomorphism} from  $\mathcal{A}$  to  $\mathcal{A}^{'}$  is defined to be a bijection $f:G\rightarrow G^{'}$  which is an isomorphism of the corresponding Cayley schemes  $\mathcal{C}(\mathcal{A})$ and $\mathcal{C}(\mathcal{A}^{'})$. The group $\iso(\mathcal{A},\mathcal{A})$  of all isomorphisms from $\mathcal{A}$ onto itself has a normal subgroup
$$\aut(\mathcal{A})=\{f\in \iso(\mathcal{A}): R(X)^f=R(X)~\text{for every}~X\in \mathcal{S}(\mathcal{A})\}.$$
This subgroup is called the \emph{automorphism group} of $\mathcal{A}$ and denoted by $\aut(\mathcal{A})$; the elements of $\aut(\mathcal{A})$ are called \emph{automorphisms} of $\mathcal{A}$. It is easy to see that $G_{right}\leq \aut(\mathcal{A})$.

An \emph{algebraic isomorphism} from $\mathcal{A}$  to $\mathcal{A}^{'}$ is defined to be a bijection $\varphi:\mathcal{S}(\mathcal{A})\rightarrow\mathcal{S}(\mathcal{A}^{'})$ such that $c_{X,Y}^Z=c_{X^{\varphi},Y^{\varphi}}^{Z^{\varphi}}$ for all $X,Y,Z\in \mathcal{S}(\mathcal{A})$. The mapping $\underline{X}\rightarrow \underline{X}^{\varphi}$ is extended by linearity to %the 
 a ring isomorphism from $\mathcal{A}$  to $\mathcal{A}^{'}$. It can be checked that every combinatorial isomorphism of $S$-rings preserves structure constants and hence induces the algebraic isomorphism. However, not every algebraic isomorphism is induced by a combinatorial one. Note that $f\in\iso(\mathcal{A},\mathcal{A})$ induces the trivial algebraic isomorphism if and only if $f\in\aut(\mathcal{A})$. 

Every algebraic isomorphism $\varphi:\mathcal{A}\rightarrow \mathcal{A}^{'}$  is extended to %the 
 a bijection between  $\mathcal{A}$- and $\mathcal{A}^{'}$-sets and hence between  $\mathcal{A}$- and $\mathcal{A}^{'}$-sections. Denote  the images of an $\mathcal{A}$-set $X$ and an $\mathcal{A}$-section $S$ under  $\varphi$  by $X^{\varphi}$ and $S^{\varphi}$ respectively. If $S$ is an $\mathcal{A}$-section then  $\varphi$ induces %the 
an algebraic isomorphism $\varphi^S:\mathcal{A}_S\rightarrow \mathcal{A}^{'}_{S^{'}}$, where $S^{'}=S^{\varphi}$.

\begin{lemm} {\em\cite[Theorem 3.3, (1)]{EP4}}\label{simgwr}
Let $\mathcal{A}$  and $\mathcal{A}^{'}$ be $S$-rings over abelian groups $G$ and $G^{'},$ respectively, and $U/L$  be an $\mathcal{A}$-section of $G$. Suppose that $\mathcal{A}$ is the $U/L$-wreath product, $\varphi$ is an algebraic isomorphism from $\mathcal{A}$ to $\mathcal{A}^{'}$, $U^{'}=U^{\varphi}$, and $L^{'}=L^{\varphi}$. Then $\mathcal{A}^{'}$ is the  $U^{'}/L^{'}$-wreath product.
\end{lemm}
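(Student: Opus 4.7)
The plan is to recast the defining radical condition of the $U/L$-wreath product as an equation in the ring $\mathcal{A}$ and then transfer it across the ring isomorphism $\mathcal{A}\to\mathcal{A}^{'}$ obtained by linear extension of $\underline{X}\mapsto\underline{X^{\varphi}}$. Since $G$ and $G^{'}$ are abelian, the normality requirements $L\trianglelefteq G$ and $L^{'}\trianglelefteq G^{'}$ are automatic, so the only content to verify is that $L^{'}\le\rad(X^{'})$ for every basic set $X^{'}$ of $\mathcal{A}^{'}$ with $X^{'}\not\subseteq U^{'}$.

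The key ring-theoretic reformulation I would use is the equivalence
$$L\le\rad(X)\;\Longleftrightarrow\;\underline{L}\,\underline{X}=|L|\,\underline{X},$$
valid for any $\mathcal{A}$-subgroup $L$ and any basic set $X$ in the abelian setting; one direction is immediate from $\ell X=X$ for all $\ell\in L$, and the other follows by comparing coefficients of elements of $X$ versus elements outside $X$. Both sides are expressions inside $\mathcal{A}$, and algebraic isomorphisms preserve cardinalities of basic sets because $|X|=c^{\{e\}}_{X,X^{-1}}$ is a structure constant; in particular $|L^{\varphi}|=|L|$. I would then note that $X\mapsto X^{\varphi}$ restricts to a bijection between basic sets of $\mathcal{A}$ contained in $U$ and basic sets of $\mathcal{A}^{'}$ contained in $U^{'}$: the decomposition $\underline{U}=\sum_{X\subseteq U}\underline{X}$ maps under $\varphi$ to $\underline{U^{'}}=\sum_{X\subseteq U}\underline{X^{\varphi}}$, forcing the sets $X^{\varphi}$ with $X\subseteq U$ to be exactly the basic sets of $\mathcal{A}^{'}$ inside $U^{'}$.

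Combining these, for any basic set $X^{'}$ of $\mathcal{A}^{'}$ with $X^{'}\not\subseteq U^{'}$, the unique basic set $X$ of $\mathcal{A}$ with $X^{\varphi}=X^{'}$ satisfies $X\not\subseteq U$, so the hypothesis gives $\underline{L}\,\underline{X}=|L|\,\underline{X}$; applying the ring isomorphism produces $\underline{L^{'}}\,\underline{X^{'}}=|L^{'}|\,\underline{X^{'}}$, and the equivalence above reads off $L^{'}\le\rad(X^{'})$, as required. The main (really only) obstacle is spotting the ring-equation form of the radical condition and verifying that $\varphi$ sends ``contained in $U$'' to ``contained in $U^{'}$''; once both of these are in place, the conclusion is a one-line computation.
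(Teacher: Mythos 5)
Your argument is correct. Note that the paper itself gives no proof of this lemma: it is imported verbatim from Evdokimov--Ponomarenko \cite[Theorem~3.3,~(1)]{EP4}, so there is nothing internal to compare against. Your reduction is the standard one underlying that reference: the radical condition $L\le\rad(X)$ is equivalent, for abelian $G$, to the ring identity $\underline{L}\,\underline{X}=|L|\,\underline{X}$ (the converse direction being a coefficient count, using $e\in L=L^{-1}$ so that equality of total mass forces $LX=X$), and this identity is visible to the algebraic isomorphism since $\underline{L}=\sum_{Z\subseteq L}\underline{Z}$ maps to $\underline{L'}$, cardinalities of basic sets are structure constants ($|X|=c^{\{e\}}_{X,X^{-1}}$), and the basic sets of $\mathcal{A}'$ inside $U'$ are exactly the images of those of $\mathcal{A}$ inside $U$ because $\mathcal{S}(\mathcal{A}')$ is a partition refining $\{U',\,G'\setminus U'\}$. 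Together with the fact (already recorded in the paper's preliminaries) that $\varphi$ carries $\mathcal{A}$-sections to $\mathcal{A}'$-sections, so that $U'/L'$ is indeed a section, and with normality of $L'$ being automatic in the abelian case, this is a complete proof.
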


A \emph{Cayley isomorphism} from  $\mathcal{A}$  to $\mathcal{A}^{'}$   is defined to be a group isomorphism $f:G\rightarrow G^{'}$ such that $\mathcal{S}(\mathcal{A})^f=\mathcal{S}(\mathcal{A}^{'})$. If there exists a Cayley isomorphism from  $\mathcal{A}$  to $\mathcal{A}^{'}$ we say that $\mathcal{A}$ and $\mathcal{A}^{'}$ are \emph{Cayley isomorphic} and write $\mathcal{A}\cong_{\cay}\mathcal{A}^{'}$. Every Cayley isomorphism is a (combinatorial) isomorphism however the converse statement is not true. %Set $\aut_G(\mathcal{A})=\aut(\mathcal{A}) \cap \aut(G)$. 

Let $f$ be a combinatorial isomorphism from $\mathcal{A}$ to $\mathcal{A}^{'}$.  Denote by $\overline{f}$ the algebraic isomorphism induced by $f$. If $S$ is an $\mathcal{A}$-section of $G$ then $f$ induces %the 
 a combinatorial isomorphism $f^S$ from $\mathcal{A}_S$ to $\mathcal{A}^{'}_{S^{'}}$, where $S^{'}=S^{\overline{f}}$, and $\overline{f^S}=\overline{f}^S$. Denote the set of all isomorphisms and the set of all Cayley isomorphisms from $\mathcal{A}$ to $\mathcal{A}^{'}$ that induce  given algebraic isomorphism~$\varphi$ by $\iso(\mathcal{A},\mathcal{A}^{'},\varphi)$ and $\iso_{Cay}(\mathcal{A},\mathcal{A}^{'},\varphi)$ respectively.  
 
Two permutation groups $K_1$ and $K_2$ acting on a set $\Omega$ are called \emph{2-equivalent} if $\orb(K_1,\Omega^2)=\orb(K_2,\Omega^2)$. In this case we write $K_1\approx_2 K_2$. If $\mathcal{A}=V(K,G)$ for some $K\leq \sym(G)$ containing $G_{right}$ then $\aut(\mathcal{A})$ is the largest group which is 2-equivalent to $K$. An $S$-ring $\mathcal{A}$ over $G$ is defined to be \emph{2-minimal} if
$$\{K\leq \sym(G):~K\geq G_{right}~\text{and}~K\approx_2 \aut(\mathcal{A})\}=\{\aut(\mathcal{A})\}.$$
We say that two groups $K_1,K_2\leq \aut(G)$ are \emph{Cayley equivalent} if $\orb(K_1,G)=\orb(K_2,G)$. In this case we write $K_1\approx_{Cay} K_2$. If $\mathcal{A}=\cyc(K,G)$ for some $K\leq \aut(G)$ then $\aut_G(\mathcal{A})$ is the largest group which is Cayley equivalent to $K$. A cyclotomic $S$-ring $\mathcal{A}$ over $G$ is defined to be \emph{Cayley minimal} if
$$\{K\leq \aut(G):~K\approx_{Cay} \aut_G(\mathcal{A})\}=\{\aut_G(\mathcal{A})\}.$$
It easy to see that the group ring $\mathbb{Z}G$ is 2- and Cayley minimal.  However, in general, $S$-ring can be 2-minimal but noncyclotomic. On the other hand, for example, if $G$ is elementary abelian group of order $p^n$, $L,U\leq G$, and $|L|=p,|U|=p^{n-1}$ then $\mathbb{Z}U\wr_{U/L}\mathbb{Z}(G/L)$ is Cayley minimal but not 2-minimal.

\subsection{$p$-$S$-rings}

Let $p$ be a prime number. We say that an $S$-ring $\mathcal{A}$ over a $p$-group $G$ is a \emph{$p$-$S$-ring} if every basic set of $\mathcal{A}$ has a $p$-power cardinality. In this subsection we give some properties of $p$-$S$-rings. Until the end of the subsection $G$ is a $p$-group and $\mathcal{A}$ is a $p$-$S$-ring over~$G$.

%\begin{lemm}{\em\cite[Proposition 2.13]{}}\label{p1}
%$(1)$ The thin radical $O_{\theta}(\mathcal{A})$ is nontrivial;

%$(2)$ there exists a chain of $\mathcal{A}$-subgroups
      %$$G_0=\{e\} < G_1 < \ldots < G_r=G$$
%such that $|G_{i+1}:G_i|=p$ for all $i\in \{0,\ldots, r-1\}$.
%\end{lemm}

\begin{lemm}{\em\cite[Proposition 3.4 (i)]{HM}}\label{p2}
Let $G$ be abelian. If there exists a basic set $X\in \mathcal{S}(\mathcal{A})$ with $|X|=|G|/p$  then $\mathcal{A}=\mathcal{A}_U \wr \mathcal{A}_{G/U}$, where $U\leq G$ is an $\mathcal{A}$-subgroup of index~$p$.
\end{lemm}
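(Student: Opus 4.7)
The plan is to take $U:=\rad(X)$ and show (i) that $U$ has index $p$ in $G$ and (ii) that every basic set of $\mathcal{A}$ outside $U$ is a full coset of $U$; statement (ii) will immediately give the wreath-product decomposition. Since $X$ is an $\mathcal{A}$-set, $U$ is automatically an $\mathcal{A}$-subgroup of $G$, and $X$ is a disjoint union of $U$-cosets, so $|U|$ divides $|X|=|G|/p$.

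I would establish~(i) by induction on $|G|$, with the base case $|G|=p$ being immediate (then $|X|=1$, so $U=\{e\}$ already has index $p$). For the inductive step, suppose towards a contradiction that $|U|<|G|/p$, so $|G/U|\ge p^2$, and pass to the quotient $\mathcal{A}_{G/U}$, which is again a $p$-$S$-ring over an abelian $p$-group. The image $\bar{X}=\pi(X)$ has size $|G/U|/p$, and I would argue it is itself a basic set of $\mathcal{A}_{G/U}$ as follows: the basic set $\bar{Y}$ of $\mathcal{A}_{G/U}$ containing $\bar{X}$ has $p$-power size that is strictly less than $|G/U|$ (because $\{\bar{e}\}$ is a distinct basic set) and at least $|\bar{X}|=|G/U|/p$, which forces $|\bar{Y}|=|G/U|/p$ and hence $\bar{Y}=\bar{X}$. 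A direct check gives $\rad_{G/U}(\bar{X})=\{\bar{e}\}$: any $\bar{g}$ with $\bar{g}\bar{X}=\bar{X}$ lifts to $g\in G$ with $gX\subseteq X$, forcing $g\in\rad(X)=U$. The inductive hypothesis applied to $\bar{X}$ in $\mathcal{A}_{G/U}$ then requires this trivial radical to have index $p$ in $G/U$, so $|G/U|=p$, contradicting $|G/U|\ge p^2$.

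Once $|U|=|G|/p$, the equality $|X|=|U|$ forces $X=Ux_0$ for any chosen $x_0\in X$. Lemma~\ref{burn} then provides, for each integer $m$ coprime to $p$, a basic set $X^{(m)}=Ux_0^m$ of $\mathcal{A}$ (using that $u\mapsto u^m$ is a bijection of $U$). Since $\bar{x}_0$ generates the cyclic group $G/U\cong C_p$, the cosets $Ux_0^m$ for $m=1,\dots,p-1$ exhaust the $p-1$ non-identity cosets of $U$, so every basic set of $\mathcal{A}$ outside $U$ is itself a full coset of $U$. Hence $U\le\rad(Z)$ for every basic set $Z$ not contained in $U$, which is exactly the defining condition of the wreath product $\mathcal{A}=\mathcal{A}_U\wr\mathcal{A}_{G/U}$. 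The main obstacle will be the inductive step, in particular verifying that $\mathcal{A}_{G/U}$ inherits the $p$-$S$-ring property and that the projection $\bar{X}$ is a basic set of $\mathcal{A}_{G/U}$; both are standard facts in the theory of $p$-$S$-rings over abelian $p$-groups but require separate justification.
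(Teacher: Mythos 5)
The paper itself does not prove this lemma (it is quoted from \cite[Proposition~3.4(i)]{HM}), so your proposal has to stand on its own, and there it has a genuine gap at its central step. Your induction showing that $U=\rad(X)$ has index $p$ correctly reduces the case $1<|U|<|G|/p$ to the strictly smaller group $G/U$: there $\bar X=\pi(X)$ is a basic set of the $p$-$S$-ring $\mathcal{A}_{G/U}$ of size $|G/U|/p$ with trivial radical, and the inductive hypothesis yields a contradiction. But when $U=\rad(X)=\{e\}$ and $|G|\geq p^2$, the quotient $G/U$ is $G$ itself, so the inductive hypothesis (available only for groups of order strictly less than $|G|$) cannot be invoked and the argument becomes circular. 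This is not a peripheral case: the whole content of the lemma is exactly the assertion that a basic set of cardinality $|G|/p$ in a $p$-$S$-ring over an abelian $p$-group of order greater than $p$ cannot have trivial radical. Once that one fact is known, your quotient step disposes of all intermediate radicals at once, so the induction is a reformulation of, rather than a proof of, the missing claim. The points you flag as needing ``separate justification'' (that $\mathcal{A}_{G/U}$ is again a $p$-$S$-ring and that $\pi(X)$ is one of its basic sets) are indeed standard; the real obstacle is the one above, and nothing quoted in Section~2 fills it directly --- Lemma~\ref{p3}(2), for instance, presupposes an $\mathcal{A}$-subgroup of index $p$ and a nontrivial thin radical, neither of which is available a priori. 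Some genuine input, e.g.\ counting with the structure constants of $\underline{X}\,\underline{X^{(-1)}}$ as in \cite{HM}, is required here.

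The second half of your argument is fine: once $|U|=|G|/p$ is granted, $X=Ux_0$ with $x_0\notin U$, Lemma~\ref{burn} produces the basic sets $X^{(m)}=Ux_0^m$ for $m$ coprime to $p$, these exhaust the $p-1$ nontrivial $U$-cosets, hence every basic set outside $U$ is a full $U$-coset and $\mathcal{A}=\mathcal{A}_U\wr\mathcal{A}_{G/U}$. That part is correct and complete.
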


\begin{lemm}{\em\cite[Lemma 2.18 (i),(iii)]{FK}}\label{p3}
Let $U$ be an $\mathcal{A}$-subgroup of index~$p$ and $X\in \mathcal{S}(\mathcal{A})$. Then the following hold:

$(1)$ $X$ is contained in an $U$-coset. In particular, $\rad(X)\leq U$.

%$(2)$ Let $L$ be an $\mathcal{A}$-subgroup of order~$p$ such that $L\trianglelefteq G$ and $L\nleq \rad(X)$. Then $xL\cap X=Lx\cap X=\{x\}$ for every $x\in X$.

$(2)$ If $G$ is abelian  and $|O_{\theta}(\mathcal{A})\cap U||X|>|G|/p$ then $O_{\theta}(\mathcal{A}) \cap \rad(X)>\{e\}$.
\end{lemm}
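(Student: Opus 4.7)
The plan for~(1) is to pass to the quotient $\mathcal{A}_{G/U}$ and exploit that $|G/U|=p$. First I will show that quotients of $p$-$S$-rings by $\mathcal{A}$-normal subgroups remain $p$-$S$-rings. The key observation is this: for any $\mathcal{A}$-subgroup $L\unlhd G$ and basic set $X\in\mathcal{S}(\mathcal{A})$, the product $\underline{X}\cdot\underline{L}\in\mathcal{A}$ has coefficient $|X\cap gL|$ at each $g\in G$, so the function $g\mapsto|X\cap gL|$ is constant on basic sets of $\mathcal{A}$. Applied to $g\in X$, this forces all fibers $X\cap gL$ (with $g\in X$) to share a common size $n$ dividing $|X|=p^{s}$, so $|\pi(X)|=|X|/n$ is again a $p$-power. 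Specializing to $L=U$, every basic set of $\mathcal{A}_{G/U}$ has $p$-power cardinality inside a group of order~$p$; since the non-identity basic sets must partition a set of $p-1$ elements into $p$-power pieces, they must all be singletons. Thus $\mathcal{A}_{G/U}=\mathbb{Z}(G/U)$, which is exactly the statement that every basic set of $\mathcal{A}$ lies in a single $U$-coset. For the radical inclusion, note that $U\unlhd G$ (every index-$p$ subgroup of a $p$-group is normal); if $X\subseteq Ug_{0}$ and $Xg=X$, then $Ug_{0}g=Ug_{0}$, whence $g\in U$.

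For~(2), write $T=O_{\theta}(\mathcal{A})$ and choose $g_{0}\in G$ with $X\subseteq Ug_{0}$ using~(1). The main step is an auxiliary fact: for any thin element $t\in T$ and any basic set $X$, the translate $tX$ is again a basic set. Indeed $\underline{tX}=\underline{\{t\}}\,\underline{X}\in\mathcal{A}$ is a $0/1$-element, and if its support decomposed as a disjoint union of two or more basic sets $Y_{1},\dots,Y_{r}$, then $\underline{\{t^{-1}\}}\,\underline{tX}=\underline{X}=\sum_{i}\underline{t^{-1}Y_{i}}$ would exhibit the basic set $X$ as a sum of $r\ge 2$ disjoint $\mathcal{A}$-sets, a contradiction. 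Since each $t\in T\cap U$ satisfies $tX\subseteq Ug_{0}$, the orbit of $X$ under left translation by $T\cap U$ consists of pairwise disjoint basic sets all lying in $Ug_{0}$. Letting $H=\{t\in T\cap U:tX=X\}$, this orbit has length $|T\cap U|/|H|$, so
\[
|X|\cdot\frac{|T\cap U|}{|H|}\le|Ug_{0}|=\frac{|G|}{p},\qquad\text{hence}\qquad|H|\ge\frac{p\,|X|\,|T\cap U|}{|G|}.
\]
The hypothesis $|T\cap U|\,|X|>|G|/p$ then gives $|H|>1$, and since $G$ is abelian, $tX=X$ implies $Xt=X$, so $\{e\}<H\le T\cap\rad(X)$.

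The main technical obstacle is the auxiliary fact that thin translates of basic sets are themselves basic sets; once this is in hand, the remaining ingredients are elementary double counting together with the pigeonhole step in the cyclic quotient $G/U$.
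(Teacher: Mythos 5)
Your proof is correct. The paper itself gives no argument for this lemma --- it is imported verbatim from \cite[Lemma 2.18 (i),(iii)]{FK} --- so there is nothing in-paper to compare against; your reconstruction (for part $(1)$, showing the quotient $\mathcal{A}_{G/U}$ is a $p$-$S$-ring over a group of order $p$ and hence the group ring, and for part $(2)$, bounding the stabilizer of $X$ under left translation by $O_{\theta}(\mathcal{A})\cap U$ via the disjointness of the translated basic sets inside the coset $Ug_0$) is essentially the standard argument from that reference. The only auxiliary fact you flag as the ``main technical obstacle'' --- that $tX$ is again a basic set for thin $t$ --- is already recorded without proof in the paper's preliminaries (``given basic sets $X$ and $Y$ the set $XY$ is also basic whenever $|X|=1$ or $|Y|=1$''), and your justification of it is the correct one.
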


\subsection{$CI$-$S$-rings}

Let $\mathcal{A}$ be an $S$-ring over $G$. Put 
$$\iso(\mathcal{A})=\{f\in \sym(G):~\text{f is an isomorphism from}~\mathcal{A}~\text{onto}~\text{$S$-ring over}~G\}.$$
We say that an $S$-ring $\mathcal{A}$ is a \emph{$CI$-$S$-ring} if $\iso(\mathcal{A})=\aut(\mathcal{A})\aut(G)$. This definition was suggested by Hirasaka and Muzychuk in~\cite{HM}. Also they proved, in fact, the following statement.

\begin{lemm}\label{ci1}
Let $G$ be a finite group. If every schurian $S$-ring over $G$ is a $CI$-$S$-ring then the $G$ is a  $DCI$-group.
\end{lemm}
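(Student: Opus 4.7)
The plan is to verify that, under the hypothesis, every subset $S\subseteq G$ is a $CI$-subset, which is by definition the $DCI$-property. Fix such an $S$ and take any $T\subseteq G$ with $\cay(G,S)\cong\cay(G,T)$; after composing a chosen graph isomorphism with a right translation from $G_{right}$, I may assume the resulting isomorphism $f\in\sym(G)$ satisfies $f(e)=e$, and hence $f(S)=T$. The task reduces to producing a $\varphi\in\aut(G)$ with $S^\varphi=T$.

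To bring the hypothesis into play, I embed $S$ in a schurian $S$-ring. Set $K=\aut(\cay(G,S))$; this contains $G_{right}$, so Schur's construction furnishes the schurian $S$-ring $\mathcal{A}=V(K,G)$. Because $K_e$ stabilizes the out-neighborhood $S$ of $e$, the set $S$ is a union of $K_e$-orbits and thus an $\mathcal{A}$-set. Analogously, $K'=\aut(\cay(G,T))$ yields a schurian $S$-ring $\mathcal{A}'=V(K',G)$ with $T$ an $\mathcal{A}'$-set. Since $f$ is a graph isomorphism, conjugation by $f$ sends $K$ to $K'$, so $f$ maps the $K$-orbits on $G\times G$ bijectively onto the $K'$-orbits on $G\times G$; that is, $f$ sends the basic relations of $\mathcal{A}$ to those of $\mathcal{A}'$. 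Hence $f$ is a combinatorial isomorphism from $\mathcal{A}$ to $\mathcal{A}'$, so $f\in\iso(\mathcal{A})$.

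By the hypothesis $\mathcal{A}$ is a $CI$-$S$-ring, so I may write $f=\alpha\varphi$ with $\alpha\in\aut(\mathcal{A})$ and $\varphi\in\aut(G)$. Evaluating at $e$, the identity $f(e)=e$ together with $\varphi(e)=e$ forces $\alpha(e)=e$. A direct check from the definition of $\aut(\mathcal{A})$ — elements preserve each basic relation $R(X)=\{(g,xg):g\in G,\,x\in X\}$ — shows that for every $\alpha\in\aut(\mathcal{A})$ and every basic set $X$ one has $\alpha(X)=X\cdot\alpha(e)$. With $\alpha(e)=e$, this means $\alpha$ fixes every basic set of $\mathcal{A}$, and hence every $\mathcal{A}$-set, setwise. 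In particular $\alpha(S)=S$. Under the right-action convention (used implicitly in the paper via notation such as $S^f$), the factorization $f=\alpha\varphi$ reads $f(x)=\varphi(\alpha(x))$, so $T=f(S)=\varphi(\alpha(S))=\varphi(S)=S^\varphi$, as required.

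The one step deserving care is the last: deducing $\alpha(S)=S$ from $\alpha(e)=e$ relies on the identity $\alpha(X)=X\cdot\alpha(e)$ for $\alpha\in\aut(\mathcal{A})$ and basic $X$. This is nearly definitional but must be stated, and it depends on the composition convention — one wants to read the factorization $f=\alpha\varphi$ so that the group automorphism $\varphi$ acts \emph{after} the $\mathcal{A}$-automorphism $\alpha$, allowing the latter to be absorbed into the identity $\alpha(S)=S$ rather than applied to the possibly non-$\mathcal{A}$-set $\varphi(S)$.
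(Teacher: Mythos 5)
Your proof is correct and complete: embedding $S$ as an $\mathcal{A}$-set of the schurian $S$-ring $V(\aut(\cay(G,S)),G)$, showing the graph isomorphism is a combinatorial isomorphism of the associated $S$-rings, and then splitting off the $\aut(\mathcal{A})$-part (which fixes every $\mathcal{A}$-set once normalized to fix $e$) is exactly the standard argument. The paper itself gives no proof of this lemma --- it attributes it to Hirasaka and Muzychuk \cite{HM} --- and your write-up, including the careful handling of the composition convention in $f=\alpha\varphi$, faithfully reconstructs that argument.
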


 Further we give another equivalent definition of the $CI$-$S$-ring that is more convenient for us.

\begin{lemm}\label{ci2}
Let $\mathcal{A}$ be an $S$-ring over $G$. Then the following conditions are equivalent:

$(1)$ $\mathcal{A}$ is a $CI$-$S$-ring;

$(2)$ for every isomorphism $f$ from $\mathcal{A}$ to an $S$-ring $\mathcal{A}^{'}$ over $G$ there exists a Cayley isomorphism $\varphi$ from $\mathcal{A}$ to $\mathcal{A}^{'}$ such that $\overline{f}=\overline{\varphi}$.
\end{lemm}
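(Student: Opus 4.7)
The plan is to handle the two implications by translating back and forth between combinatorial isomorphisms of $\mathcal{A}$ and the algebraic isomorphisms they induce. The two facts from the preliminaries I will rely on are: a Cayley isomorphism $\varphi:\mathcal{A}\to\mathcal{A}'$ is a group isomorphism $\varphi\in\aut(G)$ satisfying $\mathcal{S}(\mathcal{A})^{\varphi}=\mathcal{S}(\mathcal{A}')$ whose induced algebraic isomorphism sends $X\mapsto X^{\varphi}$; and a combinatorial self-isomorphism of $\mathcal{A}$ lies in $\aut(\mathcal{A})$ precisely when its induced algebraic isomorphism is trivial.

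For (1)$\Rightarrow$(2), I would take any isomorphism $f$ from $\mathcal{A}$ to an $S$-ring $\mathcal{A}'$ over $G$. Then $f\in\iso(\mathcal{A})$, so the $CI$-property produces a decomposition $f=\alpha\beta$ with $\alpha\in\aut(\mathcal{A})$ and $\beta\in\aut(G)$. Because $\alpha$ fixes every basic relation $R(X)$, one has $R(X)^f=R(X)^{\beta}=R(X^{\beta})$, so the fact that $f$ sends $\mathcal{R}(\mathcal{A})$ to $\mathcal{R}(\mathcal{A}')$ forces $X^{\beta}\in\mathcal{S}(\mathcal{A}')$ for every basic set $X$. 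Hence $\beta$ is itself a Cayley isomorphism from $\mathcal{A}$ to $\mathcal{A}'$, and the same computation gives $\overline{f}(X)=X^{\beta}=\overline{\beta}(X)$. Setting $\varphi=\beta$ finishes this direction.

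For (2)$\Rightarrow$(1), I would take any $f\in\iso(\mathcal{A})$ and let $\mathcal{A}'$ be its target, so that $f$ is an isomorphism from $\mathcal{A}$ to $\mathcal{A}'$. Condition (2) yields a Cayley isomorphism $\varphi:\mathcal{A}\to\mathcal{A}'$ with $\overline{f}=\overline{\varphi}$. Since $\varphi$ is a group isomorphism of $G$, we have $\varphi\in\aut(G)$, and the composition $\alpha=f\varphi^{-1}$ is a combinatorial self-isomorphism of $\mathcal{A}$ whose induced algebraic isomorphism is the composition of $\overline{\varphi}^{-1}$ with $\overline{f}=\overline{\varphi}$, hence trivial. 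By the second fact above, $\alpha\in\aut(\mathcal{A})$, and therefore $f=\alpha\varphi\in\aut(\mathcal{A})\aut(G)$, proving (1).

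I do not expect any serious obstruction in either direction: both arguments are formal manipulations built on the bookkeeping of algebraic isomorphisms. The one point requiring a little care is the compositional behaviour $\overline{gh}=\overline{h}\circ\overline{g}$ (and in particular $\overline{\varphi^{-1}}=\overline{\varphi}^{-1}$), which is immediate from the defining relation $R(X)^h=R(X^{\overline{h}})$ applied twice.
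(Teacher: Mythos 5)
Your proposal is correct and follows essentially the same argument as the paper: in one direction you factor $f=\alpha\beta$ with $\alpha\in\aut(\mathcal{A})$ and observe that $\beta=\alpha^{-1}f$ is a Cayley isomorphism inducing $\overline{f}$, and in the other you note that $f\varphi^{-1}$ induces the trivial algebraic isomorphism and hence lies in $\aut(\mathcal{A})$. The only difference is that you spell out the bookkeeping on basic relations and composition of induced algebraic isomorphisms, which the paper leaves implicit.
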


\begin{proof}
Let $f$ be an isomorpism from $\mathcal{A}$ to an $S$-ring $\mathcal{A}^{'}$ over $G$. Suppose that $\mathcal{A}$ is a $CI$-$S$-ring. Then $f=f_1\varphi$, where $f_1\in \aut(\mathcal{A})$ and $\varphi\in \aut(G)$. The bijection $\varphi=f_1^{-1}f$ is a Cayley isomorphism from $\mathcal{A}$ to $\mathcal{A}^{'}$ and $\overline{\varphi}=\overline{f_1^{-1}f}=\overline{f}$ because $f_1$ induces the trivial algebraic isomorphism. Therefore, Condition~$(2)$ of the lemma holds.  
 
Conversly, suppose that Condition~$(2)$ of the lemma holds. Then there exists a Cayley isomorphism $\varphi$ from $\mathcal{A}$ to  $\mathcal{A}^{'}$ such that $\overline{f}=\overline{\varphi}$. So $f\varphi^{-1}$ is an isomorphism from $\mathcal{A}$ to itself that induces the trivial algebraic isomorphism. This means that $f\varphi^{-1}\in \aut(\mathcal{A})$ and hence $\mathcal{A}$ is a $CI$-$S$-ring. 
\end{proof}

From~\cite[Theorem 3.2]{HM} it follows that the tensor product and the $S$-wreath product with $|S|=1$ of two $CI$-$S$-rings over an elementary abelian group is a $CI$-$S$-ring.

We finish the subsection with two recent results on the $CI$-property for $S$-rings over an elementary abelian group.

\begin{lemm}{\em\cite[Proposition 3.3]{FK}}\label{quont}
Let $\mathcal{A}$ be a schurian $p$-$S$-ring over an elementary abelian group $G$ and $L\leq G$  an $\mathcal{A}$-subgroup of order $p$ such that $\mathcal{A}_{G/L}$ is 2-minimal. Then $\mathcal{A}$ is a $CI$-$S$-ring.
\end{lemm}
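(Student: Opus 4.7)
The plan is to apply Lemma~\ref{ci2}: given an arbitrary combinatorial isomorphism $f\colon\mathcal{A}\to\mathcal{A}'$ with $\mathcal{A}'$ an $S$-ring over~$G$, I will produce a Cayley isomorphism $\varphi\in\aut(G)$ satisfying $\overline{\varphi}=\overline{f}$. As a first simplification, since $G$ is elementary abelian, the group $\aut(G)\cong\GL_n(\mF_p)$ acts transitively on order-$p$ subgroups of~$G$. Composing $f$ with a suitable element of $\aut(G)$ therefore reduces the problem to the case $L^{\overline{f}}=L$, so in particular $L$ is an $\mathcal{A}'$-subgroup and $f$ descends to a combinatorial isomorphism $f^{G/L}\colon\mathcal{A}_{G/L}\to\mathcal{A}'_{G/L}$ between two $S$-rings on the common group~$G/L$.

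Next I would exploit the 2-minimality hypothesis at the quotient level. The isomorphism $f^{G/L}$ yields the conjugation relation $\aut(\mathcal{A}'_{G/L})=f^{G/L}\,\aut(\mathcal{A}_{G/L})\,(f^{G/L})^{-1}$ inside $\sym(G/L)$, and both groups contain $(G/L)_{right}$. Invoking the uniqueness clause in the definition of 2-minimality, applied to a subgroup generated by $(G/L)_{right}$ together with $\aut(\mathcal{A}_{G/L})\cap\aut(\mathcal{A}'_{G/L})$, I would argue that $\aut(\mathcal{A}_{G/L})=\aut(\mathcal{A}'_{G/L})$, whence $\mathcal{A}_{G/L}=\mathcal{A}'_{G/L}$ as $S$-rings on $G/L$. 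With this equality in hand, a further appeal to 2-minimality applied to $f^{G/L}$ itself should show that $f^{G/L}$ differs from a Cayley automorphism by an element of $\aut(\mathcal{A}_{G/L})$, so the induced algebraic isomorphism $\overline{f^{G/L}}$ is realized by some $\bar\varphi\in\aut(G/L)$.

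Finally, I would lift $\bar\varphi$ to $\varphi\in\aut(G)$ preserving~$L$: the elementary abelian decomposition $G=L\oplus H$ for any complement~$H$ makes such lifts available, and any residual freedom can be fixed by composing with an element of $\aut(G)$ acting trivially on~$G/L$. To verify $\varphi$ is a Cayley isomorphism $\mathcal{A}\to\mathcal{A}'$, the classification of basic sets in a $p$-$S$-ring relative to the order-$p$ subgroup~$L$ is key: every basic set $X\in\mathcal{S}(\mathcal{A})$ either satisfies $L\leq\rad(X)$ (in which case $X$ is the full preimage of its image in~$G/L$, handled by the quotient data) or projects injectively onto its image in~$G/L$ (and is then determined by that image together with a single $L$-coordinate, by the $p$-$S$-ring property and Lemma~\ref{p3}). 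Matching these two types of basic sets across $\mathcal{A}$ and $\mathcal{A}'$ through $\overline{f}$ yields the required Cayley isomorphism. The main obstacle is the middle paragraph: the definition of 2-minimality is phrased for a fixed regular subgroup inside a single ambient permutation group, so a subtle orbital comparison is needed to derive from it the equality $\mathcal{A}_{G/L}=\mathcal{A}'_{G/L}$ when starting with a combinatorial isomorphism between two a priori distinct $S$-rings; once this is established, the remainder of the argument is routine bookkeeping using the elementary abelian and $p$-$S$-ring structure.
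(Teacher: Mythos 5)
A preliminary remark: the paper does not prove this lemma at all --- it is quoted verbatim from \cite[Proposition~3.3]{FK} --- so your argument has to stand on its own, and its pivotal middle step does not. You propose to deduce $\aut(\mathcal{A}_{G/L})=\aut(\mathcal{A}'_{G/L})$ by applying 2-minimality to $H=\langle (G/L)_{right},\,\aut(\mathcal{A}_{G/L})\cap\aut(\mathcal{A}'_{G/L})\rangle$. But 2-minimality only excludes \emph{proper} subgroups of $\aut(\mathcal{A}_{G/L})$ that contain $(G/L)_{right}$ \emph{and are 2-equivalent to} $\aut(\mathcal{A}_{G/L})$. Your $H$ is a subgroup of $\aut(\mathcal{A}_{G/L})$, so its 2-orbits refine those of $\aut(\mathcal{A}_{G/L})$, but nothing forces equality of the 2-orbits; the hypothesis of the uniqueness clause is simply not available, and $\mathcal{A}_{G/L}=\mathcal{A}'_{G/L}$ does not follow. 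Even granting that equality, your next claim --- that $f^{G/L}$ then factors as a Cayley automorphism times an element of $\aut(\mathcal{A}_{G/L})$ --- is precisely the assertion that $\mathcal{A}_{G/L}$ is a $CI$-$S$-ring, which is neither a hypothesis nor a consequence of 2-minimality as defined in this paper.

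A telling symptom is that you never use the schurianity of $\mathcal{A}$, which is what makes 2-minimality bite: from $\mathcal{A}=V(\aut(\mathcal{A}),G)$ one gets $\mathcal{A}_{G/L}=V(\aut(\mathcal{A})^{G/L},G/L)$, hence $\aut(\mathcal{A})^{G/L}\approx_2\aut(\mathcal{A}_{G/L})$, and 2-minimality then forces $\aut(\mathcal{A})^{G/L}=\aut(\mathcal{A}_{G/L})$, i.e.\ every automorphism of the quotient lifts to an automorphism of $\mathcal{A}$. That lifting, applied to the two regular elementary abelian subgroups $G_{right}$ and $fG_{right}f^{-1}$ of $\aut(\mathcal{A})$ and their images in $\sym(G/L)$, is the actual engine of the proof in \cite{FK}; realizing only $\overline{f^{G/L}}$ by a Cayley isomorphism of $G/L$ and then lifting that to $\aut(G)$ is strictly weaker. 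Concretely, your final bookkeeping fails on the basic sets $X$ with $L\not\leq\rad(X)$: such an $X$ does map injectively into $G/L$ (that dichotomy is correct, since $|Lx\cap X|$ is constant over $x\in X$, is at most $p$, and divides the $p$-power $|X|$), but $X$ is then a transversal-type subset of the full preimage of its image, not ``determined by the image and a single $L$-coordinate''; a Cayley isomorphism of $G$ chosen only to induce $\overline{f^{G/L}}$ on the quotient can send $X$ to a different transversal than $X^{\overline{f}}$, and your construction provides no mechanism to correct this without the automorphism-lifting described above.
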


The next lemma is a particular case of \cite[Proposition 3.4]{FK}.

\begin{lemm}\label{cyclotom}
Let $G$ be an elementary abelian group of odd order. If for every $p$-group $K\leq \aut(G)$ with $|C_G(K)|\geq p^2$ the $S$-ring $\cyc(K,G)$ is a $CI$-$S$-ring then $G$ is a $DCI$-group
\end{lemm}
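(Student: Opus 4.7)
By Lemma~\ref{ci1}, it suffices to prove that every schurian $S$-ring $\cA$ over $G$ is a $CI$-$S$-ring. Writing $|G|=p^n$ with $p$ an odd prime, I would argue by induction on $n$; the cases $n\le 1$ are trivial. For the inductive step every proper subgroup and every proper quotient of $G$ is again elementary abelian of odd order, and Lemma~\ref{ci1} makes the inductive hypothesis equivalent to the statement that every schurian $S$-ring over any such smaller group is a $CI$-$S$-ring.

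The core dichotomy is whether $\cA$ is decomposable. If $\cA=\cA_U\wr_S\cA_{G/L}$ is a proper generalized wreath product over an $\cA$-section $S=U/L$, then both $\cA_U$ and $\cA_{G/L}$ are schurian over smaller elementary abelian groups and hence $CI$-$S$-rings by the inductive hypothesis. Theorem~\ref{main} then closes the case once Condition~(1) is verified; in the present $p$-group setting this factorisation follows from a Sylow-type argument showing that the solvable $p$-group $\aut_S(\cA_S)$ is generated by its two natural $p$-subgroups $\aut_U(\cA_U)^S$ and $\aut_{G/L}(\cA_{G/L})^S$.

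If $\cA$ is indecomposable, I would invoke the structural step behind \cite[Proposition~3.4]{FK} to conclude that $\cA$ is cyclotomic: $\cA=\cyc(M,G)$ for some $M\le\aut(G)$. Schur's Lemma~\ref{burn} implies that enlarging $M$ by the scalar automorphisms in $\aut(G)\cong\GL_n(p)$ does not change the basic sets, and a further reduction yields $\cA=\cyc(K,G)$ for a $p$-subgroup $K\le\aut(G)$. If $|C_G(K)|\ge p^2$ the hypothesis applies directly. The residual case $|C_G(K)|\le p$ corresponds to an $S$-ring whose thin radical $O_\theta(\cA)=C_G(K)$ has order at most $p$: either $K=\{1\}$ and $\cA$ is the group ring (trivially $CI$), or $|C_G(K)|=p$, in which case the combination of indecomposability with Lemmas~\ref{p2}--\ref{p3} forces $\cA$ into a shape whose $CI$-property can be verified directly.

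The main obstacle is the indecomposable-to-cyclotomic reduction in the final case: it is the heart of the technical machinery of~\cite{FK} and lies outside the framework developed in the excerpt above, so a self-contained argument would require redeveloping that structural theory for $p$-$S$-rings over elementary abelian groups.
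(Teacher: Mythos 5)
The paper offers no proof of this lemma at all: it is imported as ``a particular case of \cite[Proposition 3.4]{FK}'', so there is nothing internal to compare your argument against; judged on its own, your sketch has two gaps that are not repairable by routine means. The first is the claim that Condition~(1) of Theorem~1 holds for every decomposable schurian $S$-ring via ``a Sylow-type argument showing that the solvable $p$-group $\aut_S(\cA_S)$ is generated by its two natural $p$-subgroups''. At this stage you have not reduced to $p$-$S$-rings, so $\aut_S(\cA_S)$ need not be a $p$-group; ``generated by'' is in any case strictly weaker than the set-theoretic factorisation $\aut_S(\cA_S)=\aut_U(\cA_U)^S\aut_{G/L}(\cA_{G/L})^S$ that Condition~(1) demands; and no generic argument of this kind can exist, because verifying~(1) for decomposable $p$-$S$-rings is precisely what Section~5 of this paper and ten pages of \cite{FK} are devoted to, by a genuinely case-by-case analysis (the authors even state they do not know whether~(1) is necessary). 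If the decomposable case were automatic, your induction would prove far too much.

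Second, the reduction of the indecomposable case to cyclotomic $S$-rings $\cyc(K,G)$ with $K$ a $p$-group and $|C_G(K)|\geq p^2$ is asserted rather than proved --- you acknowledge this yourself --- but that reduction (from arbitrary schurian $S$-rings to $p$-$S$-rings via Sylow subgroups of $\aut(\cay(G,S))$ containing $G_{right}$ and 2-equivalence, and the further argument producing the bound $|C_G(K)|\ge p^2$) is the entire content of \cite[Proposition 3.4]{FK} and the Hirasaka--Muzychuk machinery behind it; the actual route does not pass through a decomposable/indecomposable dichotomy for general schurian $S$-rings at all. Taking a Sylow $p$-subgroup of a given $M\le\aut(G)$ also does not preserve the $S$-ring: $\cyc$ of a smaller group is a finer (larger) $S$-ring, so ``a further reduction yields $\cA=\cyc(K,G)$ for a $p$-subgroup $K$'' needs a real argument. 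A minor additional point: Lemma~\ref{ci1} is a one-way implication, so it does not make your inductive hypothesis equivalent to the CI-property of all schurian $S$-rings over smaller groups; you would have to carry the stronger statement through the induction and check that the lemma's hypothesis on $G$ descends to sections of $G$. As it stands, the proposal is an outline of where a proof would have to go rather than a proof.
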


\section{Proof of Theorem~\ref{main}}

Let $\mathcal{A}^{'}$ be an $S$-ring over $G$ and $f$ an isomorphism from $\mathcal{A}$ to $\mathcal{A}^{'}$. From Lemma~\ref{simgwr} it follows that $\mathcal{A}^{'}$ is the  $U^{{f}}/L^{{f}}$-wreath product. Since $G$ is the direct product of elementary abelian groups, there exists $\theta\in \aut(G)$ such that $U^{\theta}=U^{{f}}$ and $L^{\theta}=L^{{f}}$. So replacing 
$\mathcal{A}'$ %by $\mathcal{A}^{\theta^{-1}}$ 
 with $(\mathcal{A}')^{\theta^{-1}}$  
and $f$ %by 
 with 
$f\theta^{-1}$ we may assume that $U^{{f}}=U$ and $L^{{f}}=L$. 

By the supposition, the $S$-rings  $\mathcal{A}_U$ and $\mathcal{A}_{G/L}$ are CI-$S$-rings. So by Lemma~\ref{ci2} there exist Cayley isomorphisms $\varphi_0:\mathcal{A}_U\rightarrow \mathcal{A}^{'}_U$ and $\psi_0:\mathcal{A}_{G/L}\rightarrow \mathcal{A}^{'}_{G/L}$ such that $\overline{f^U}=\overline{\varphi_0}$ and $\overline{f^{G/L}}=\overline{\psi_0}$. It is clear that 
$$\iso_{Cay}(\mathcal{A}_U,\mathcal{A}_U^{'},\overline{f^U})=\aut_U(\mathcal{A}_U)\varphi_0$$ 
and 
$$\iso_{Cay}(\mathcal{A}_{G/L},\mathcal{A}_{G/L}^{'},\overline{f^{G/L}})=\aut_{G/L}(\mathcal{A}_{G/L})\psi_0.$$
Let us show that there exist $\varphi\in \aut_U(\mathcal{A}_U)\varphi_0$ and $\psi\in\aut_{G/L}(\mathcal{A}_{G/L})\psi_0$ such that $\varphi^S=\psi^S$. Note that $\overline{\varphi_0^S}=\overline{\psi_0^S}=\overline{f^S}$. So $\varphi_0^S(\psi_0^S)^{-1}\in \aut_S(\mathcal{A}_S)$. By the condition of the theorem  $\aut_{S}(\mathcal{A}_{S})=\aut_U(\mathcal{A}_U)^{S}\aut_{G/L}(\mathcal{A}_{G/L})^{S}$. This implies that there exist $\sigma_1\in \aut_U(\mathcal{A}_U)$ and $\sigma_2\in \aut_{G/L}(\mathcal{A}_{G/L})$ such that $\varphi_0^S(\psi_0^S)^{-1}=\sigma_1^S\sigma_2^S$. Put 
$$\varphi=\sigma_1^{-1}\varphi_0~\text{and}~\psi=\sigma_2\psi_0.$$ 
The straightforward check shows that 
$$\varphi^S=(\sigma_1^S)^{-1}\varphi_0^S=(\sigma_1^S)^{-1}\sigma_1^S\sigma_2^S \psi_0^S=\psi^S.$$

Since $G$ is the direct product of elementary abelian groups, there exist groups $D$ and $V$ such that $G=D\times U$ and $U=V\times L$. Let $D=\langle x_1 \rangle \times \ldots \times \langle x_t \rangle$ and $(x_iL)^{\psi}=y_iz_iL$, where $y_i\in D,~z_i\in V$, and $i\in\{1,\ldots,t\}$. The elements $y_iz_i,~i\in\{1,\ldots,t\}$, generate %the 
 a group $D^{'}$ of rank~$t$ because $\rk(D)=t$ 
, $LD^{'}/L=(LD/L)^\psi$ and $D \cap L=D^{'}\cap L=\{e\}$. The latter equation follows from the facts that 
$D^{'} \le V \times D$ and $(V \times D) \cap L=\{e\}$. 

Note that %$D^{'}\cap U=e$. 
$D^{'}\cap U= \{e\}$  also holds.
Indeed, let $g\in D^{'}\cap U$. Then $(gL)^{\psi^{-1}}\in S\cap D/L=\{L\}$ because $S^{\psi}=S^{{f^{G/L}}}=S$. So $g\in D^{'} \cap L,$ and hence $g=e$. 
%However, $D^{'}\leq D\times V$ and hence $g\in (D\times V)\cap L=\%{e\}$. This means that $G=D^{'}\times U$. 

Since $G$ is the direct product of elementary abelian groups and $G=D\times U=D^{'}\times U$, there exists $\alpha\in \aut(G)$ such that
$$\alpha^U=\varphi,~(x_i)^{\alpha}=y_iz_i,~i\in\{1,\ldots,t\}.$$
From the definition of $\alpha$ it follows that $L^{\alpha}=L$, $U^{\alpha}=U$, and $(dL)^{\psi}=d^{\alpha}L$ for every $d\in D$. Let us check that $X^{\alpha}=X^f$ for every $X\in \mathcal{S}(\mathcal{A})$. If $X\subseteq U$ then $X^{\alpha}=X^f$ by the definition of $\alpha$. Suppose that $X$ lies outside $U$. Then $L\leq \rad(X)$ and hence $X=d_1v_1L\cup \ldots \cup d_sv_sL$, where $d_i\in D,v_i\in V,~i\in\{1,\ldots,s\}$. Clearly, 
$$X^{\alpha}=d_1^{\alpha}v_1^{\alpha}L^{\alpha}\cup \ldots \cup d_s^{\alpha}v_s^{\alpha}L^{\alpha}=d_1^{\alpha}v_1^{\varphi}L\cup \ldots~\cup d_s^{\alpha}v_s^{\varphi}L.$$ 
Since $\varphi^S=\psi^S$, we conclude that $v_i^{\varphi}L=(v_iL)^{\varphi}=(v_iL)^{\psi}$ for every $i\in\{1,\ldots,s\}$. The direct check implies the following:
\begin{eqnarray}
\nonumber X^f/L=(X/L)^{f^{G/L}}=(X/L)^{\psi}=\{d_1v_1L,\ldots,d_sv_sL\}^{\psi}=\\
\nonumber\{(d_1L)^{\psi}(v_1L)^{\psi},\ldots,(d_sL)^{\psi}(v_sL)^{\psi}\}=\{d_1^{\alpha}v_1^{\varphi}L,\ldots,d_s^{\alpha}v_s^{\varphi}L\}=X^{\alpha}/L.
\end{eqnarray}
Since $L\leq \rad(X)$ and $L\leq \rad(X^{\alpha})$, we obtain that $X^f=X^{\alpha}$. Therefore, 
$$\alpha\in \iso_{Cay}(\mathcal{A},\mathcal{A}^{'},\overline{f}).$$ 

Thus, we proved that for every isomorphism $f$ from $\mathcal{A}$ to an $S$-ring $\mathcal{A}^{'}$ over $G$ there exists a Cayley isomorphism which induces the algebraic isomorphism $\overline{f}$. So $\mathcal{A}$ is a $CI$-$S$-ring by Lemma~\ref{ci2} and the theorem is proved.

\section{Corollaries of Theorem~\ref{main}}

Throughout this section $\mathcal{A}$ is an $S$-ring over a group $G$ which is the direct product of elementary abelian groups. In the next two propositions we assume that $\mathcal{A}$ is the $S$-wreath product for some $\mathcal{A}$-section $S=U/L$ of $G$ and the $S$-rings $\mathcal{A}_U$ and $\mathcal{A}_{G/L}$ are $CI$-$S$-rings.

\begin{prop}\label{trivial}
If $\mathcal{A}_S=\mathbb{Z}S$ then $\mathcal{A}$ is a $CI$-$S$-ring.
\end{prop}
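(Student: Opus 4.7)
The plan is to invoke Theorem~1 directly. The hypothesis already supplies that $\mathcal{A}_U$ and $\mathcal{A}_{G/L}$ are $CI$-$S$-rings, so everything reduces to verifying Condition~(1), namely
$$\aut_S(\mathcal{A}_S)=\aut_U(\mathcal{A}_U)^S\aut_{G/L}(\mathcal{A}_{G/L})^S.$$
I would accomplish this by showing that both sides are trivial, which is the only nontrivial observation in the argument.

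The key computation is that $\aut_S(\mathbb{Z}S)=\{\id\}$. The basic relations of the Cayley scheme of $\mathbb{Z}S$ are the singleton graphs $R(\{s\})=\{(g,sg):g\in S\}$, so a combinatorial automorphism $f\in\aut(\mathbb{Z}S)$ must satisfy $(sg)^f=s\cdot g^f$ for all $s,g\in S$; substituting $g=e$ yields $f(s)=s\cdot f(e)$, i.e., $f$ is a right translation. A right translation $g\mapsto gs_0$ of the abelian group $S$ is simultaneously a group automorphism only when $s_0^2=s_0$, forcing $s_0=e$. Hence $\aut_S(\mathcal{A}_S)=\aut_S(\mathbb{Z}S)=\{\id\}$.

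Because $\aut_U(\mathcal{A}_U)^S$ and $\aut_{G/L}(\mathcal{A}_{G/L})^S$ are both subgroups of $\aut_S(\mathcal{A}_S)$, they too are trivial, so the product on the right-hand side of~(1) is $\{\id\}$ and the equality holds. Theorem~1 then concludes that $\mathcal{A}$ is a $CI$-$S$-ring. I do not foresee any genuine obstacle: the entire proposition is a one-line corollary of Theorem~1 once the triviality of $\aut_S(\mathbb{Z}S)$ is recorded, and one could equivalently invoke the ``in particular'' clause of Theorem~1 in either of its two forms.
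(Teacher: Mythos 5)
Your proof is correct and follows essentially the same route as the paper: both arguments reduce Condition~(1) of Theorem~1 to the observation that $\aut_S(\mathcal{A}_S)$ is trivial when $\mathcal{A}_S=\mathbb{Z}S$, which forces the two sides of~(1) to coincide. The only difference is that you spell out the (easy) computation that $\aut_S(\mathbb{Z}S)=\{\id\}$, which the paper states without proof.
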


\begin{proof}
Obviously, $\aut_U(\mathcal{A}_U)^S\leq \aut_S(\mathcal{A}_S)$. On the other hand, $\aut_S(\mathcal{A}_S)$ is trivial because $\mathcal{A}_S=\mathbb{Z}S$. Thus, $\aut_U(\mathcal{A}_U)^S=\aut_S(\mathcal{A}_S)$ and we are done by Theorem~\ref{main}.
\end{proof}

\begin{prop}\label{min}
Let $\mathcal{A}$ be  cyclotomic. Suppose that $\mathcal{A}_S$ is 2-minimal or Cayley minimal. Then $\mathcal{A}$ is a $CI$-$S$-ring.
\end{prop}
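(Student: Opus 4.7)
The plan is to verify the hypothesis of Theorem~\ref{main}; in fact I will aim for the stronger equality $\aut_U(\mathcal{A}_U)^S=\aut_S(\mathcal{A}_S)$, so that the ``in particular'' clause of Theorem~\ref{main} applies and finishes the proof.

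Since $\mathcal{A}$ is cyclotomic, write $\mathcal{A}=\cyc(M,G)$ for some $M\le\aut(G)$. Because $U$ and $L$ are $\mathcal{A}$-subgroups, they are stabilized by $M$, and the quotient identities recalled in Section~2 give $\mathcal{A}_U=\cyc(M^U,U)$, $\mathcal{A}_S=\cyc(M^S,S)$, and $\mathcal{A}_{G/L}=\cyc(M^{G/L},G/L)$. First I would establish the sandwich
$$M^S\le\aut_U(\mathcal{A}_U)^S\le\aut_S(\mathcal{A}_S).$$
The left inclusion holds because $M^U\le\aut_U(\mathcal{A}_U)$ stabilizes $S$ and restricts on $S$ to $M^S$; the right inclusion is the general inequality noted in the introduction. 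Since $\aut_S(\mathcal{A}_S)$ is by definition the largest subgroup of $\aut(S)$ which is Cayley equivalent to $M^S$, sandwiching forces $\aut_U(\mathcal{A}_U)^S$ to be Cayley equivalent to $\aut_S(\mathcal{A}_S)$, i.e.\ to have the basic sets of $\mathcal{A}_S$ as its orbits on $S$.

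If $\mathcal{A}_S$ is Cayley minimal, this Cayley equivalence immediately yields $\aut_U(\mathcal{A}_U)^S=\aut_S(\mathcal{A}_S)$. If instead $\mathcal{A}_S$ is $2$-minimal, I would form the permutation group $K=S_{right}\cdot\aut_U(\mathcal{A}_U)^S\le\sym(S)$ (a semidirect product, since $\aut(S)$ normalizes $S_{right}$). It contains $S_{right}$, its stabilizer of the identity of $S$ equals $\aut_U(\mathcal{A}_U)^S$, and the orbits of that stabilizer on $S$ are the basic sets of $\mathcal{A}_S$ by the preceding paragraph. Hence $V(K,S)=\mathcal{A}_S$, so $K\approx_2\aut(\mathcal{A}_S)$. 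By $2$-minimality, $K=\aut(\mathcal{A}_S)$. Intersecting both sides with $\aut(S)$ and using $S_{right}\cap\aut(S)=\{\id\}$ for abelian $S$ yields $\aut_U(\mathcal{A}_U)^S=\aut_S(\mathcal{A}_S)$.

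In either case the condition of Theorem~\ref{main} is satisfied via its ``in particular'' clause, so $\mathcal{A}$ is a $CI$-$S$-ring. I expect the main subtlety to be the sandwich argument identifying the orbits of $\aut_U(\mathcal{A}_U)^S$ on $S$ with the basic sets of $\mathcal{A}_S$: it is precisely where cyclotomy is used (to get $M^S$ below $\aut_U(\mathcal{A}_U)^S$), and it is what converts either flavour of minimality into the group equality needed to invoke Theorem~\ref{main}.
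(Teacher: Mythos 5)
Your proposal is correct and follows essentially the same route as the paper: both reduce to showing $\aut_U(\mathcal{A}_U)^S=\aut_S(\mathcal{A}_S)$ by first observing that $\mathcal{A}_S=\cyc(\aut_U(\mathcal{A}_U)^S,S)$ (your sandwich $M^S\le\aut_U(\mathcal{A}_U)^S\le\aut_S(\mathcal{A}_S)$ just makes explicit what the paper calls ``clear''), and then invoking Cayley minimality directly or $2$-minimality via the group $S_{right}\aut_U(\mathcal{A}_U)^S$. Your finish of the $2$-minimal case by intersecting with $\aut(S)$ is a cosmetic variant of the paper's order comparison using $H\cap S_{right}=\{\id\}$.
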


\begin{proof}
Since $\mathcal{A}$ is cyclotomic, the $S$-rings $\mathcal{A}_U$ and $\mathcal{A}_S$ are also cyclotomic. Clearly, $\mathcal{A}_S=\cyc(\aut_U(\mathcal{A}_U)^S, S)$. Therefore, 
$$\aut_U(\mathcal{A}_U)^S S_{right}\approx_2 \aut(\mathcal{A}_S)~\text{and}~\aut_U(\mathcal{A}_U)^S\approx_{Cay} \aut_S(\mathcal{A}_S).$$ 
If $\mathcal{A}_S$ is Cayley minimal then $\aut_U(\mathcal{A}_U)^S=\aut_S(\mathcal{A}_S)$ and we are done by Theorem~\ref{main}. Suppose that $\mathcal{A}_S$ is 2-minimal. Then  
$$\aut_U(\mathcal{A}_U)^S S_{right}=\aut(\mathcal{A}_S)\geq \aut_S(\mathcal{A}_S) S_{right}.$$
Each of the subgroups $\aut_U(\mathcal{A}_U)^S$ and $\aut_S(\mathcal{A}_S)$ intersects trivially with $S_{right}$. %So 
 This shows that
$|\aut_U(\mathcal{A}_U)^S|\geq |\aut_S(\mathcal{A}_S)|$. On the other hand, obviously, $\aut_U(\mathcal{A}_U)^S\leq \aut_S(\mathcal{A}_S)$. Thus, $\aut_U(\mathcal{A}_U)^S=\aut_S(\mathcal{A}_S)$ and Theorem~\ref{main} implies that $\mathcal{A}$ is a $CI$-$S$-ring.
\end{proof}

If $(g_1,\ldots,g_n)\in G^n$ and $\varphi\in \aut(G)$ then put $(g_1,\ldots,g_n)^{\varphi}=(g_1^{\varphi},\ldots,g_n^{\varphi})$. In the following two propositions we assume that $G$ is the elementary abelian group of order~$p^n$, where $p$ is a prime and $n\geq 1$, and $\mathcal{A}$ is a $p$-$S$-ring over $G$. 

\begin{prop}\label{thin}
Suppose that $|G:O_{\theta}(\mathcal{A})|=p$. Then 

$(1)$ $\mathcal{A}=\mathbb{Z}O_{\theta}(\mathcal{A}) \wr_{O_{\theta}(\mathcal{A})/L} \mathbb{Z}(G/L)$ for some $\mathcal{A}$-subgroup $L\leq O_{\theta}(\mathcal{A})$;

$(2)$ $\mathcal{A}$ is a cyclotomic Cayley minimal $CI$-$S$-ring.
\end{prop}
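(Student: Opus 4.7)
My approach is to first establish (1) by identifying the correct $\mathcal{A}$-subgroup $L \leq U := O_{\theta}(\mathcal{A})$, and then to read off (2) from the resulting explicit description of $\mathcal{A}$.

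For (1), fix any basic set $X$ outside $U$. Every singleton basic set lies in $U$, so $|X|>1$, and Lemma~\ref{p3}$(2)$ applied with $|O_{\theta}(\mathcal{A}) \cap U| = |U| = |G|/p$ yields $\{e\} < \rad(X) \leq U$. The key claim is that all basic sets outside $U$ share the same radical. Within the same $U$-coset as $X$, any basic set $Y$ is a $U$-translate of $X$: picking $x\in X, y\in Y$ and $u := yx^{-1}\in U$, the set $uX$ is basic (since $\{u\}$ is basic) and intersects $Y$, so $uX=Y$ and $\rad(Y)=\rad(X)$. Across distinct non-identity $U$-cosets, basic sets are related by Schur's multiplier (Lemma~\ref{burn}): $X^{(m)}$ is basic in $Ug^{m}$ with $\rad(X^{(m)})=\rad(X)^{(m)}=\rad(X)$ for $\gcd(m,p)=1$. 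Setting $L := \rad(X)$, the $U$-translates of $X$ form a single $U$-orbit of size $|U|/|L|$ that partitions $Ug$, so $(|U|/|L|)\cdot|X|=|U|$ and hence $|X|=|L|$. Thus every basic set outside $U$ is precisely an $L$-coset, and $\mathcal{A}$ is the $U/L$-wreath product with $\mathcal{A}_U=\mathbb{Z}U$ and $\mathcal{A}_{G/L}=\mathbb{Z}(G/L)$.

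Part (2) then follows from this description. Write $G=U\times\langle g\rangle$ for some $g\notin U$, and for each $\ell\in L$ define $\alpha_{\ell}\in\aut(G)$ to be the identity on $U$ and send $g\mapsto g\ell$. The subgroup $M:=\{\alpha_{\ell}:\ell\in L\}$ of $\aut(G)$ is isomorphic to $L$, and a direct orbit computation (using that $\ell\mapsto\ell^{i}$ is a bijection of $L$ for $i\in\{1,\dots,p-1\}$) shows that $\orb(M,G)$ consists of the singletons of $U$ together with the $L$-cosets outside $U$; hence $\mathcal{A}=\cyc(M,G)$. The same description forces any $K\leq\aut(G)$ with $K\approx_{Cay}M$ to act trivially on $U$ and transitively on each $L$-coset outside $U$, so $K=M$ and $\mathcal{A}$ is Cayley minimal. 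Finally, $\mathcal{A}_{S}=(\mathbb{Z}U)_{S}=\mathbb{Z}S$ for $S=U/L$, while $\mathcal{A}_{U}$ and $\mathcal{A}_{G/L}$ are group rings and hence $CI$-$S$-rings (an immediate calculation gives $\iso(\mathbb{Z}H)=H_{right}\aut(H)$ for abelian $H$); Proposition~\ref{trivial} then yields that $\mathcal{A}$ is a $CI$-$S$-ring.

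The main technical obstacle is the radical-and-size argument in (1): it simultaneously requires the regular action of the thin part $U$ to identify basic sets in a single $U$-coset, Schur's multiplier to move between $U$-cosets, and a partition/orbit count in one coset to conclude $|X|=|\rad(X)|$. Once $\mathcal{A}$ has been recognized as $\mathbb{Z}U\wr_{U/L}\mathbb{Z}(G/L)$, all three assertions of (2) reduce to short explicit computations with $M$.
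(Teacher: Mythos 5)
Your proof is correct. Part (2) is essentially the paper's own argument: an explicit description of the cyclotomic group (which the paper identifies directly with $\aut_G(\mathcal{A})$), the count $|K|\geq|L|$ coming from transitivity on a basic set for Cayley minimality, and Proposition~\ref{trivial} for the $CI$-property. The genuine divergence is in part (1), in how one shows that a basic set $X$ outside $U=O_{\theta}(\mathcal{A})$ is a single coset of $L=\rad(X)$. The paper argues by contradiction: if $|L|<|X|$, then in the quotient $G/L$ the image $\pi(X)$ is a basic set with trivial radical, which contradicts Statement~(2) of Lemma~\ref{p3} applied to the index-$p$ subgroup $O_{\theta}(\mathcal{A}_{G/L})$. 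You instead run an orbit count inside the coset $Ug$ containing $X$: the translates $uX$ ($u\in U$) are basic sets, the stabilizer of $X$ in $U$ is $U\cap\rad(X)=L$, and the distinct translates partition $Ug$, whence $(|U|/|L|)\,|X|=|U|$ and $|X|=|L|$. This is more elementary and arguably cleaner: it needs only Statement~(1) of Lemma~\ref{p3}, and your preliminary appeal to Statement~(2) of that lemma is in fact redundant, since $|X|=|L|$ together with $|X|>1$ already forces $L\neq\{e\}$. Both proofs then reach the remaining basic sets outside $U$ in the same way, by translating with elements of the thin radical and applying Schur's theorem on multipliers (Lemma~\ref{burn}).
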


\begin{proof}
Let $X$ be a basic set of $\mathcal{A}$ outside $O_{\theta}(\mathcal{A})$ and $|X|=p^k$. Put $L=\rad(X)$. Assume that $|L|=p^l<p^k$. Statement~$(1)$ of Lemma~\ref{p3} yields that $L\leq O_{\theta}(\mathcal{A})$. Let $\pi:G\rightarrow G/L$ be the canonical epimorphism. The set $\pi(X)$ is a basic set with the trivial radical of the $S$-ring $\mathcal{A}_{G/L}$  and $|\pi(X)|\geq p$. Note that $|G/L|=p^{n-l}$ and $|O_{\theta}(\mathcal{A}_{G/L})|=p^{n-l-1}$. So $|\pi(X)||O_{\theta}(\mathcal{A}_{G/L})|>|G/L|/p$. We obtain a contradiction with Statement~$(2)$ of Lemma~\ref{p3} for $U=O_{\theta}(\mathcal{A}_{G/L})$. Therefore, $X$ is an $L$-coset. From  Lemma~\ref{burn} it follows that every basic set of $\mathcal{A}$ outside  $O_{\theta}(\mathcal{A})$ is of the form $gX^{(m)}$, where $g\in O_{\theta}(\mathcal{A})$ and $m$ is an integer coprime to $p$. This implies that  every basic set  of $\mathcal{A}$ outside $O_{\theta}(\mathcal{A})$ is an $L$-coset and hence $\mathcal{A}$ is the $O_{\theta}(\mathcal{A})/L$-wreath product. It is clear that $\mathcal{A}_{O_{\theta}(\mathcal{A})}=\mathbb{Z}O_{\theta}(\mathcal{A})$ and $\mathcal{A}_{G/L}=\mathbb{Z}(G/L)$. Thus, Statement~$(1)$ of the proposition is proved.

Obviously, $\mathcal{A}_{O_{\theta}(\mathcal{A})}$ and $\mathcal{A}_{G/L}$ are $CI$-$S$-rings. It follows also that  $\mathcal{A}_{O_{\theta}(\mathcal{A})/L}=\mathbb{Z}(O_{\theta}(\mathcal{A})/L)$ and hence $\mathcal{A}$ is a $CI$-$S$-ring by Proposition~\ref{trivial}. Let us prove that $\mathcal{A}$ is  Cayley minimal. Let $g_1,\ldots,g_{n-1}$ be generators of $O_{\theta}(\mathcal{A})$ and $x\in X$. Then 
$$\aut_G(\mathcal{A})=\{\sigma\in \aut(G):~(g_1,\ldots,g_{n-1},x)^{\sigma}=(g_1,\ldots,g_{n-1},xl),l\in L\}$$ 
and $\mathcal{A}=\cyc(\aut_G(\mathcal{A}),G)$.  Besides, $|\aut_G(\mathcal{A})|=|L|$. If $K\approx_{Cay} \aut_G(\mathcal{A})$ then $|K|\geq |L|$ because $X$  is an orbit of $K$. Therefore, $K=\aut_G(\mathcal{A})$. This means that $\mathcal{A}$ is  Cayley minimal.
\end{proof}

\begin{prop}\label{easy}
Let $\mathcal{A}$ be a cyclotomic $p$-$S$-ring which is the $S$-wreath product for some $\mathcal{A}$-section $S=U/L$ of $G$. Suppose that the $S$-rings $\mathcal{A}_U$ and $\mathcal{A}_{G/L}$ are $CI$-$S$-rings and $|G:O_{\theta}(\mathcal{A})|=p^2$. Then $\mathcal{A}$ is a $CI$-$S$-ring.
\end{prop}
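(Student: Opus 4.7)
My plan is to reduce Proposition~\ref{easy} to the already-established Propositions~\ref{trivial},~\ref{thin} and~\ref{min} via a short case analysis driven by a single structural observation. Write $O=O_{\theta}(\mathcal{A})$. Because $\mathcal{A}$ is a nontrivial $S$-wreath product, every basic set lying outside~$U$ contains~$L$ in its radical and is hence a disjoint union of $L$-cosets, so it has cardinality at least $|L|>1$. In particular, no singleton basic set lies outside~$U$, and therefore $O\leq U$. Combined with $U<G$ and the hypothesis $|G:O|=p^2$, this forces $|U:O|\in\{1,p\}$.

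If $|U:O|=1$, then $\mathcal{A}_U=\mathbb{Z}U$, hence $\mathcal{A}_S=\mathbb{Z}S$, and Proposition~\ref{trivial} concludes the argument. Assume instead that $|U:O|=p$ and inspect the $\mathcal{A}$-subgroup $OL\leq U$. If $L\not\leq O$, then $|L:L\cap O|$ is a positive power of~$p$ dividing $|U:O|=p$, and hence equals~$p$; thus $OL=U$, the image of $O$ in $S$ is all of $S$, which lies in $O_{\theta}(\mathcal{A}_S)$, again forcing $\mathcal{A}_S=\mathbb{Z}S$ and Proposition~\ref{trivial} applies. If $L\leq O$, then $O/L\leq O_{\theta}(\mathcal{A}_S)$ and $|S:O/L|=|U:O|=p$, so $|S:O_{\theta}(\mathcal{A}_S)|\in\{1,p\}$: when this index equals~$1$ we finish via Proposition~\ref{trivial}, and when it equals~$p$ I would apply Proposition~\ref{thin} to the cyclotomic $p$-$S$-ring $\mathcal{A}_S$ over the elementary abelian group~$S$ to deduce that $\mathcal{A}_S$ is Cayley minimal, and then invoke Proposition~\ref{min}.

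The principal difficulty is isolating the right first step: once one notices that the nontriviality of the $S$-wreath product forces $O\leq U$, the hypothesis $|G:O|=p^2$ leaves only two values for $|U:O|$, and the structure of $\mathcal{A}_S$ becomes transparent. Every remaining verification---that $\mathcal{A}_S$ inherits the cyclotomic and $p$-$S$-ring structure of $\mathcal{A}$, that $O/L$ lies in $O_{\theta}(\mathcal{A}_S)$ whenever $L\leq O$, and that the standing hypotheses of Propositions~\ref{trivial},~\ref{thin} and~\ref{min} are satisfied---is immediate from the setup.
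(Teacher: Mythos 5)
Your proposal is correct and follows essentially the same route as the paper: observe that every basic set outside $U$ has nontrivial radical, hence $O_{\theta}(\mathcal{A})\leq U$ and $|S:O_{\theta}(\mathcal{A}_S)|\in\{1,p\}$, then conclude via Proposition~\ref{trivial} in the first case and via Statement~$(2)$ of Proposition~\ref{thin} together with Proposition~\ref{min} in the second. The only difference is that you spell out the subcases $L\leq O$ and $L\not\leq O$, which the paper leaves implicit.
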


\begin{proof}
Since every basic set outside $U$ has %the 
nontrivial radical, we obtain that $O_{\theta}(\mathcal{A})\leq U$ and hence $|S:O_{\theta}(\mathcal{A}_S)|\in \{1,p\}$. If $|S:O_{\theta}(\mathcal{A}_S)|=1$ then $\mathcal{A}_S=\mathbb{Z}S$ and $\mathcal{A}$ is a $CI$-$S$-ring by Proposition~\ref{trivial}. If $|S:O_{\theta}(\mathcal{A}_S)|=p$ then $\mathcal{A}_S$ is Cayley minimal by Statement~$(2)$ of Proposition~\ref{thin}. Therefore, Proposition~\ref{min} implies that $\mathcal{A}$ is a $CI$-$S$-ring.
\end{proof}

\section{Decomposable $S$-rings over elementary abelian groups of small ranks}

Let $p$ be an odd prime and $G$   be 
an elementary abelian group of order $p^n,~n\geq 1$. These notations are valid until the end of the paper. In view of Lemma~\ref{cyclotom} to  prove that $G$ is a $DCI$-group it is sufficient to show every cyclotomic $p$-$S$-ring over $G$ is a $CI$-$S$-ring. In fact, it was proved that every cyclotomic $p$-$S$-ring over $G$ is a $CI$-$S$-ring for $n=4$ in~\cite{HM} and for $n=5$ in~\cite[Theorem~5.2]{FK}. One of the main difficulties in the proofs is to check the $CI$-property for decomposable $S$-rings. For example, in the case $n=5$ the proof of the fact that every decomposable cyclotomic $p$-$S$-ring is a $CI$-$S$-ring takes 10 pages. The main goal of this section is to give a  short proof of this fact for $n\leq 5$ using Theorem~\ref{main}. 

We start the section with the description of all $p$-$S$-rings over an elementary abelian group of rank at most~$3$. All $p$-$S$-rings over an elementary abelian group of rank at most~$2$ and all schurian $p$-$S$-rings over the elementary abelian group of rank~$3$ were described in~\cite[p.14-15]{HM}. Later in~\cite{Sp2}, it was proved that every $p$-$S$-ring over the elementary abelian group of rank~$3$ is schurian. The next lemma summarizes all these results.

\begin{lemm}\label{rank3}
Let $n\leq 3$ and $\mathcal{A}$  be a $p$-$S$-ring over $G$. Then $\mathcal{A}$ is cyclotomic and 

$(1)$ if $n=1$ then $\mathcal{A}\cong_{Cay}\mathbb{Z}C_p$;

$(2)$ if $n=2$ then $\mathcal{A}\cong_{Cay}\mathbb{Z}C_p$ or $\mathcal{A}\cong_{Cay}\mathbb{Z}C_p\wr \mathbb{Z}C_p$;

$(3)$ if $n=3$ then $\mathcal{A}$ is  one of the $S$-rings given in Table~$1$ up to Cayley isomorphism.
\end{lemm}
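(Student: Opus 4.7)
The plan is to handle the three cases in increasing order of $n$. Cases (1) and (2) reduce to elementary counting combined with Lemma~\ref{p2} and the fact that the basic-set sizes of a $p$-$S$-ring are forced to be $p$-powers, whereas case (3) is essentially a combination of two results already cited in the paragraph just before the lemma. For $n=1$, the basic set $\{e\}$ takes one element and the remaining $p-1$ elements must be partitioned into basic sets of $p$-power cardinality; since $\gcd(p-1,p)=1$, every other basic set is forced to be a singleton, so $\mathcal{A}=\mathbb{Z}G$. This $S$-ring equals $\cyc(1,G)$ and is in particular cyclotomic.

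For $n=2$, basic-set sizes lie in $\{1,p,p^2\}$; size $p^2$ is excluded because $\{e\}$ is always a separate basic set, leaving only $1$ and $p$. If every basic set is a singleton then $\mathcal{A}=\mathbb{Z}G$; otherwise there is a basic set of size $p=|G|/p$, and Lemma~\ref{p2} forces $\mathcal{A}=\mathcal{A}_U\wr\mathcal{A}_{G/U}$ for an $\mathcal{A}$-subgroup $U$ of index $p$. Applying case~(1) to $\mathcal{A}_U$ and $\mathcal{A}_{G/U}$ gives that both factors are group rings, hence $\mathcal{A}\cong_{Cay}\mathbb{Z}C_p\wr\mathbb{Z}C_p$. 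For cyclotomicity I would exhibit a transvection $\sigma\in\aut(G)$ of order~$p$ that fixes $U$ pointwise and has a single orbit on each $U$-coset outside $U$; the orbits of $\langle\sigma\rangle$ on $G$ then coincide with the basic sets, so $\mathcal{A}=\cyc(\langle\sigma\rangle,G)$.

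For $n=3$, I would appeal directly to the cited literature rather than reprove the classification. The work of Hirasaka and Muzychuk~\cite{HM} describes all schurian $p$-$S$-rings over $C_p^3$, and Spiga~\cite{Sp2} establishes that every $p$-$S$-ring over $C_p^3$ is automatically schurian. Combining these yields Table~1 up to Cayley isomorphism, and cyclotomicity of each entry can be verified case-by-case by naming a suitable subgroup of $\aut(G)$ realizing the listed partition as its orbit decomposition. The main obstacle is precisely Spiga's schurity theorem in case~(3): without it, ruling out non-schurian $p$-$S$-rings over $C_p^3$ would require significant additional work. By contrast, cases~(1) and~(2) are essentially immediate from Lemma~\ref{p2} and Lemma~\ref{burn}, and the bulk of the labor in case~(3) is simply to recognize that the classification already exists in the literature and to assemble the entries of Table~1 accordingly.
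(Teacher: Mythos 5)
Your proposal is correct and follows essentially the same route as the paper, which states this lemma as a summary of known results, citing Hirasaka--Muzychuk~\cite{HM} for the description of $p$-$S$-rings of rank at most $2$ and of schurian $p$-$S$-rings of rank $3$, together with Spiga--Wang~\cite{Sp2} for the schurity of every $p$-$S$-ring over $C_p^3$. The only difference is cosmetic: you supply short self-contained counting arguments (via Lemma~\ref{p2}) for the cases $n\leq 2$, where the paper simply cites~\cite{HM}, and these arguments are sound.
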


\begin{table}

{\small
\begin{tabular}{|l|l|l|l|}
  \hline
  % after \\: \hline or \cline{col1-col2} \cline{col3-col4} ...
  no. & $\mathcal{A}$ & decomposable & $|O_{\theta}(\mathcal{A})|$    \\
  \hline
  $1.$ & $\mathbb{Z}C_p^3$ & no & $p^3$\\ \hline
  $2.$ & $\mathbb{Z}C_p^2\wr \mathbb{Z}C_p$  & yes & $p^2$ \\  \hline
  $3.$ & $\mathbb{Z}C_p\wr \mathbb{Z}C_p^2$  & yes & $p$ \\  \hline
	$4.$ & $(\mathbb{Z}C_p\wr \mathbb{Z}C_p)\otimes \mathbb{Z}C_p$  & yes & $p^2$\\  \hline
	$5.$ & $\mathbb{Z}C_p\wr \mathbb{Z}C_p\wr \mathbb{Z}C_p$  & yes  & $p$\\  \hline
	$6.$ & $\cyc(\langle \sigma \rangle,C_p^3),~\sigma=\left(\begin{smallmatrix} 1& 1& 0\\ 0& 1& 1\\ 0& 0& 1\end{smallmatrix}\right)$ & no  & $p$\\ \hline

\end{tabular}
}
\caption{$p$-$S$-rings over $C_p^3$ for an odd prime $p$.}
\end{table}
%Table 1.

\begin{lemm}{\em\cite[Lemma 2.17]{FK}}\label{except}
Let $n=3$ and $\mathcal{A}$  be a $p$-$S$-ring over $G$ which is Cayley isomorphic to the $S$-ring no.~$6$ from Table~$1$. Then $|\aut(\mathcal{A})|=p^4$ and $|\aut(\mathcal{A})_e|=p$.
\end{lemm}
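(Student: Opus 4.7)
The $S$-ring $\mathcal{A}$ is the cyclotomic ring $\cyc(M,G)$ with $M=\langle\sigma\rangle$. Since $\sigma-I$ is a $3\times 3$ nilpotent matrix of index~$3$ and $p$ is odd, $\sigma$ has order~$p$, so $|M|=p$ and $K:=G_{right}M\le\aut(\mathcal{A})$ has order $p^4$ with $K_e=M$. This already gives the lower bounds $|\aut(\mathcal{A})|\ge p^4$ and $|\aut(\mathcal{A})_e|\ge p$, so the task is to show these are equalities by proving that every $f\in\aut(\mathcal{A})_e$ equals $\sigma^j$ for some~$j$.

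Fix $f\in\aut(\mathcal{A})_e$. From $|G:O_{\theta}(\mathcal{A})|=p^2$ (entry~$6$ of Table~$1$) the thin radical $H=O_{\theta}(\mathcal{A})=\{(a,0,0):a\in\mF_p\}$ has order~$p$, and since each $h\in H$ gives a singleton basic set, $f$ fixes $H$ pointwise and preservation of $R(\{h\})$ is equivalent to $f(g+h)=f(g)+h$ for all $g\in G$. Identify $G$ with $\mF_p^3$, and let $P_c=\mF_p^2\times\{c\}$. Every basic set of $\mathcal{A}$ sits inside a single $P_c$, so $f$ stabilises each $P_c$ setwise.

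The plan then proceeds in two stages. First, for each $c$, define $j_c$ by $f(0,0,c)=\sigma^{j_c}(0,0,c)$ (well-defined since $f(0,0,c)$ lies in the $\sigma$-orbit of $(0,0,c)$) and show $f|_{P_c}=\sigma^{j_c}|_{P_c}$. For $c\neq 0$ this follows by using $R(L_b)$, where $L_b=\{(a,b,0):a\in\mF_p\}$ is the basic set of $(0,b,0)$ with $b\neq 0$, to fix the second coordinate of $f(0,b,c)$; matching with the $\sigma$-orbit of $(0,b,c)$ then forces $f(0,b,c)=\sigma^{j_c}(0,b,c)$, and $H$-equivariance extends this to all of $P_c$. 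Applying $R(B)$, where $B$ is the $\sigma$-orbit of $(0,0,1)$, to the pairs $\bigl((a,b,0),(a,b,1)\bigr)$ links $P_0$ with $P_1$ and pins down $f|_{P_0}=\sigma^{j_1}|_{P_0}$, so that $j_0=j_1$.

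Second, show that $j_c$ is independent of $c$. Applying $R(B)$ to $\bigl((0,0,c),(0,0,c+1)\bigr)$ yields two equations in $j_c,j_{c+1}$ from the first two coordinates of the difference $f(0,0,c+1)-f(0,0,c)$; substituting the linear one into the quadratic one and collecting terms produces, after several cancellations, the single identity
\[
-c(c+1)(j_{c+1}-j_c)^2=0\quad\text{in }\mF_p.
\]
For $c\not\equiv 0,-1\pmod p$ this forces $j_{c+1}=j_c$; a short induction over $c\in\{1,\ldots,p-1\}$ then gives $j_c=j_1$ for every $c\in\mF_p$. Hence $f=\sigma^{j_1}\in M$, so $\aut(\mathcal{A})_e=M$ and $|\aut(\mathcal{A})|=p^4$. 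The main obstacle is the algebraic cancellation leading to the clean factor $(j_{c+1}-j_c)^2$: it is not visible from the structural properties of $\mathcal{A}$ and depends on a careful combination of the two relations coming from $R(B)$-membership.
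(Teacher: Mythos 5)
Your argument is correct, but note that the paper does not prove this lemma at all: it is quoted verbatim from \cite[Lemma~2.17]{FK}, so there is no internal proof to compare against. What you have produced is a self-contained verification by direct computation with the basic relations of $\cC(\cA)$, and the key steps check out. The lower bound is immediate from $G_{right}\langle\sigma\rangle\leq\aut(\cA)$ with $\sigma$ of order $p$ (here oddness of $p$ is genuinely used, since $\sigma^p=I+pN+\binom{p}{2}N^2$ with $N=\sigma-I$). For the upper bound, the facts that $f\in\aut(\cA)_e$ fixes every basic set setwise and is $O_\theta(\cA)$-equivariant are exactly what make the coordinatewise analysis go through. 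I verified your central cancellation: with $u=j_{c+1}$, $v=j_c$, $d=u-v$ and $k=u+cd$, membership of $f(0,0,c+1)-f(0,0,c)$ in the orbit $B$ of $(0,0,1)$ gives $k^2-k=(u^2-u)(c+1)-(v^2-v)c$, and the difference of the two sides collapses to $cd(2u+cd-1-(u+v-1))=c(c+1)d^2$, so indeed $j_{c+1}=j_c$ for all $c\not\equiv 0,-1\pmod p$; since the links needed to chain $j_1=\cdots=j_{p-1}$ use only $c\in\{1,\ldots,p-2\}$, none of which is $0$ or $-1$ modulo $p$, and $j_0=j_1$ is supplied separately by the $R(B)$-link between $P_0$ and $P_1$, the induction closes for every odd $p$ (including $p=3$). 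Hence $\aut(\cA)_e=\langle\sigma\rangle$ and $|\aut(\cA)|=|G|\cdot|\aut(\cA)_e|=p^4$ by transitivity. The only cosmetic remark is that your identity carries a sign $-c(c+1)(j_{c+1}-j_c)^2$ where I obtain $+c(c+1)(j_{c+1}-j_c)^2$; this is irrelevant to the conclusion and presumably reflects the order in which you subtracted. Your approach buys independence from \cite{FK}, at the cost of a computation that is specific to the matrix $\sigma$ and would not generalize to other indecomposable $p$-$S$-rings.
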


\begin{lemm}\label{2min}
Let $n\leq 3$ and $\mathcal{A}$   be an indecomposable  $p$-$S$-ring over $G$. Then $\mathcal{A}$ is 2-minimal.
\end{lemm}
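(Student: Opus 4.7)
The plan is to reduce the lemma, via the classification in Lemma~\ref{rank3}, to a short finite check on each indecomposable representative. Inspecting the list in Lemma~\ref{rank3} and Table~$1$ shows that, up to Cayley isomorphism, the indecomposable $p$-$S$-rings over $G$ with $n\le 3$ are exactly the group rings $\mathbb{Z}C_p^n$ for $n\in\{1,2,3\}$ together with the exceptional $S$-ring $\mathcal{A}_0=\cyc(\langle\sigma\rangle,C_p^3)$ of row~$6$. Since $2$-minimality is invariant under Cayley isomorphism, it is enough to verify $2$-minimality for each of these four $S$-rings.

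For the group ring cases I would appeal to the fact, already recorded in Subsection~2.2, that $\mathbb{Z}G$ is $2$-minimal for every $G$. Equivalently, the only subgroup $K\leq\sym(G)$ containing $G_{right}$ with $K\approx_2 \aut(\mathbb{Z}G)$ is $G_{right}$ itself, because the singleton basic sets of $\mathbb{Z}G$ force each combinatorial automorphism to be a right translation, so $\aut(\mathbb{Z}G)=G_{right}$.

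The only substantive case is $\mathcal{A}_0$. Given any $K\leq\sym(G)$ with $G_{right}\leq K$ and $K\approx_2\aut(\mathcal{A}_0)$, the general principle recalled in Subsection~2.2, namely that $\aut(\mathcal{A})$ is the largest group $2$-equivalent to any $K$ with $V(K,G)=\mathcal{A}$, yields $K\leq\aut(\mathcal{A}_0)$. By Lemma~\ref{except}, $|\aut(\mathcal{A}_0)|=p^4$, and since $|G_{right}|=p^3$ the order of $K$ lies in $\{p^3,p^4\}$. If $|K|=p^3$ then $K=G_{right}$ and $V(K,G)=\mathbb{Z}G$ is of rank $p^3$; but by Table~$1$ we have $|O_{\theta}(\mathcal{A}_0)|=p$, so $\mathcal{A}_0$ possesses a non-singleton basic set and $\mathcal{A}_0\neq\mathbb{Z}G$. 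Two $2$-equivalent transitive groups determine the same $S$-ring, contradicting $K\approx_2\aut(\mathcal{A}_0)$; hence $|K|=p^4$ and $K=\aut(\mathcal{A}_0)$.

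The main obstacle is really the exceptional $S$-ring $\mathcal{A}_0$, and there the key ingredient is the equality $|\aut(\mathcal{A}_0)|=p^4$ supplied by Lemma~\ref{except}; once this is in hand, the rest is a rank comparison between $\mathbb{Z}G$ and $\mathcal{A}_0$. All the other indecomposable cases dissolve immediately because for a group ring the unique $2$-equivalent overgroup of $G_{right}$ is $G_{right}$ itself.
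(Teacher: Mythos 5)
Your proposal is correct and follows essentially the same route as the paper: reduce via the classification in Lemma~\ref{rank3} to the group rings and the exceptional cyclotomic $S$-ring no.~$6$, and settle the latter using $|\aut(\mathcal{A})|=p^4$ from Lemma~\ref{except}. The paper's proof is terser but the underlying argument is the one you spell out.
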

\begin{proof}
If $\mathcal{A}=\mathbb{Z}G$ then, obviously, $\mathcal{A}$ is 2-minimal. If $\mathcal{A}\neq \mathbb{Z}G$ then $n=3$ and $\mathcal{A}$ is Cayley isomorphic to the $S$-ring no.~$6$ from Table~$1$. In this case the statement of the lemma follows from Lemma~\ref{except}.
\end{proof}

\begin{lemm}{\em\cite[Theorem 4.1]{FK}}\label{rank4}
Let $n=4$ and $\mathcal{A}$  be an indecomposable schurian $p$-$S$-ring over $G$. Then $\mathcal{A}$ is 2-minimal.
\end{lemm}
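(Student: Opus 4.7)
The plan is to deduce 2-minimality of $\mathcal{A}$ by reducing the rank-$4$ problem to the already settled rank-$\leq 3$ case. Suppose that $G_{right}\leq K\leq\sym(G)$ with $K\approx_2\aut(\mathcal{A})$, so that $\mathcal{A}=V(K,G)$; the task is to upgrade this 2-equivalence to an actual equality $K=\aut(\mathcal{A})$.

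First, I would exploit indecomposability in its sharpest form. Lemma~\ref{p2} forbids basic sets of cardinality $p^3$, and Lemma~\ref{p3} restricts how basic sets are distributed among $\mathcal{A}$-subgroups of small index. Moreover, for every proper $\mathcal{A}$-section $U/L$ (with $\{e\}<L$ and $U<G$), there must be a basic set $X\subseteq G\setminus U$ with $L\not\leq\rad(X)$, otherwise $\mathcal{A}$ would be the $U/L$-wreath product. These constraints together leave only a short list of admissible $\mathcal{A}$-subgroup lattices and of candidates for $\mathcal{A}$ itself; for each $\mathcal{A}$-subgroup $H$, both $\mathcal{A}_H$ and $\mathcal{A}_{G/H}$ have rank $\leq 3$ and are classified by Lemma~\ref{rank3}.

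Next, I would transport 2-minimality section-by-section. For every proper $\mathcal{A}$-section $S$ of $G$, the induced group $K^S$ satisfies $S_{right}\leq K^S$ and $K^S\approx_2\aut(\mathcal{A})^S\leq\aut(\mathcal{A}_S)$; so whenever $\mathcal{A}_S$ happens to be indecomposable, Lemma~\ref{2min} forces $K^S=\aut(\mathcal{A}_S)$. For decomposable sections, which are (generalized) wreath products drawn from Table~$1$, one can compute $\aut(\mathcal{A}_S)$ explicitly from the structure and read off the same equality (invoking Lemma~\ref{except} to control the exotic rank-$3$ $S$-ring of Table~$1$, item~$6$). Thus the action of $K$ on each proper section is completely pinned down.

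Finally, I would glue these local coincidences into the global identity $K=\aut(\mathcal{A})$. The crucial ingredient is precisely the indecomposability of $\mathcal{A}$: any putative $\sigma\in\aut(\mathcal{A})\setminus K$ has to act nontrivially on some proper $\mathcal{A}$-section, contradicting the equalities $K^S=\aut(\mathcal{A})^S$ obtained above, provided that sections with indecomposable quotients detect $\sigma$. The main obstacle I anticipate is the case when every proper section of $\mathcal{A}$ is itself decomposable; there the local reduction no longer uniquely determines $K$ on small pieces, and one must argue globally with the stabilizer $\aut(\mathcal{A})_e$ as a $p$-subgroup of $\GL(4,p)$, combining the schurian hypothesis with the unipotent structure to rule out any strictly smaller 2-equivalent subgroup containing $G_{right}$.
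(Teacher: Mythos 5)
The paper itself contains no proof of this lemma: it is imported verbatim as \cite[Theorem~4.1]{FK}, where the argument occupies a substantial portion of that paper and proceeds by explicitly classifying the indecomposable schurian $p$-$S$-rings over $C_p^4$ and computing the automorphism group (and its $2$-closure) for each family. So there is no internal proof to compare against, and your proposal must stand on its own. Its opening moves are in the right spirit --- Lemma~\ref{p2} does exclude basic sets of size $p^3$, indecomposability does give, for each proper $\mathcal{A}$-section $U/L$, a basic set outside $U$ whose radical does not contain $L$, and the rank-$3$ classification of Lemma~\ref{rank3} is indeed the main input in \cite{FK} --- but the proposal never carries out the resulting case analysis, and its two closing steps contain genuine errors.

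First, the gluing step is unsound. Establishing $K^S=\aut(\mathcal{A})^S=\aut(\mathcal{A}_S)$ for every proper $\mathcal{A}$-section $S$ only pins down the images of $K$ under the restriction maps, not $K$ itself; one must also control the kernels of $K\to K^S$. The assertion that any $\sigma\in\aut(\mathcal{A})\setminus K$ must act nontrivially on some proper section is precisely what needs to be proved: it fails badly for generalized wreath products, and ruling this behaviour out for indecomposable $\mathcal{A}$ is the hard content of the theorem rather than a formal consequence of indecomposability. Second, the fallback for the case where all proper sections are decomposable rests on a false premise: $\aut(\mathcal{A})_e$ is the point stabilizer in the \emph{combinatorial} automorphism group, consisting of arbitrary permutations of $G$ fixing $e$, and is not a subgroup of $\GL(4,p)$ --- only $\aut_G(\mathcal{A})=\aut(\mathcal{A})\cap\aut(G)$ is. Already for $\mathbb{Z}C_p\wr\mathbb{Z}C_p$ over $C_p^2$ one computes $|\aut(\mathcal{A})_e|=p^{p-1}$, which exceeds the order $p$ of a Sylow $p$-subgroup of $\GL(2,p)$ for every odd $p$; this inflation of stabilizers is exactly what makes $2$-minimality delicate. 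What is missing, and what \cite{FK} actually supplies, is the explicit list of indecomposable rank-$4$ candidates (extracted from Lemmas~\ref{p2}, \ref{p3}, Schur's theorem on multipliers and Table~$1$) together with a direct verification, for each candidate, that no proper subgroup of $\aut(\mathcal{A})$ containing $G_{right}$ has the same orbits on $G\times G$.
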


\begin{lemm}\label{caymin}
Let  $n\leq 3$ and $\mathcal{A}$  be a $p$-$S$-ring over $G$. Then $\mathcal{A}$ is Cayley minimal except for the case when $n=3$ and $\mathcal{A}\cong_{Cay} \mathbb{Z}C_p\wr \mathbb{Z}C_p\wr \mathbb{Z}C_p$ (the $S$-ring no. 5 from Table~$1$).
\end{lemm}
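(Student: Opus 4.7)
\emph{Plan.} The plan is to go through the classification of Lemma~\ref{rank3} case by case. The group rings $\mathbb{Z}C_p^n$ are Cayley minimal by the remark in Subsection~2.2 that $\mathbb{Z}G$ is Cayley minimal; this handles $n=1$, the trivial case at $n=2$, and no.~1 of Table~1. For $\mathbb{Z}C_p\wr\mathbb{Z}C_p$ at $n=2$, and for nos.~2 and~4 of Table~1, I would check that $|G:O_{\theta}(\mathcal{A})|=p$, and deduce Cayley minimality immediately from Proposition~\ref{thin}(2).

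Three cases then remain. For no.~6, since $\mathcal{A}=\cyc(\langle\sigma\rangle,G)$ is cyclotomic we have $\langle\sigma\rangle\leq\aut_G(\mathcal{A})\leq\aut(\mathcal{A})_e$, and Lemma~\ref{except} gives $|\aut(\mathcal{A})_e|=p$; hence $\aut_G(\mathcal{A})$ has prime order $p$. For cyclotomic $\mathcal{A}$, the group $\aut_G(\mathcal{A})$ is by definition the largest subgroup of $\aut(G)$ Cayley equivalent to itself, so any $K'\approx_{Cay}\aut_G(\mathcal{A})$ lies in $\aut_G(\mathcal{A})$; and $K'$ cannot be trivial because not all $\aut_G(\mathcal{A})$-orbits are singletons. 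Hence $K'=\aut_G(\mathcal{A})$.

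For no.~3, where $\mathcal{A}=\mathbb{Z}C_p\wr\mathbb{Z}C_p^2$ with $\mathcal{A}$-subgroup $L$ of order $p$, I would observe that $\aut_G(\mathcal{A})$ consists precisely of the ``shears'' $g\mapsto g+\phi(\bar g)$ with $\phi\in\mathrm{Hom}(G/L,L)$, and so is an $\mathbb{F}_p$-space of dimension $2$. If $K'\leq\aut_G(\mathcal{A})$ has the same orbits, then for every nonzero $\bar g\in G/L$ the evaluation $\phi\mapsto\phi(\bar g)$ from $K'$ into $L$ must be surjective; a $1$-dimensional $K'$ spanned by some $\phi_0$ fails this at any nonzero vector of $\ker\phi_0$, which exists because $\dim(G/L)>\dim L$. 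Hence $K'$ must be $2$-dimensional, i.e., equal to $\aut_G(\mathcal{A})$.

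The main obstacle is the exceptional case no.~5, $\mathcal{A}=\mathbb{Z}C_p\wr\mathbb{Z}C_p\wr\mathbb{Z}C_p$, where I must prove that $\mathcal{A}$ is \emph{not} Cayley minimal. Setting up a flag $L_1<L_2<G$ of orders $p,p^2,p^3$ with generators $e_1,e_2,e_3$, a short computation will show that $\aut_G(\mathcal{A})$ is the unipotent upper-triangular group with respect to this flag, of order $p^3$, parameterized by $(a,b,c)\in\mathbb{F}_p^3$ sending $e_2\mapsto e_2+ae_1$ and $e_3\mapsto e_3+be_2+ce_1$. I would then exhibit the proper subgroup $K'=\{(b,b,c):b,c\in\mathbb{F}_p\}$ of order $p^2$ and verify by coordinates that its orbits on $G$ coincide with those of $\aut_G(\mathcal{A})$, namely the singletons in $L_1$, the $L_1$-cosets in $L_2\setminus L_1$, and the $L_2$-cosets in $G\setminus L_2$; this exhibits the exception and completes the case analysis.
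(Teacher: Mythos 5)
Your proof is correct and takes essentially the same route as the paper's: group rings and the index-$p$ thin-radical cases are dispatched via Proposition~\ref{thin}(2), no.~6 via Lemma~\ref{except} together with the maximality of $\aut_G(\mathcal{A})$ among Cayley-equivalent subgroups, and for no.~3 your linear-algebra phrasing (a one-dimensional group of shears has a nonzero fixed vector outside $L$) is the same fixed-point argument the paper runs with explicit generators $a,b,c$. Your final step establishing non-minimality of no.~5 is not needed for the lemma as stated, but it coincides with the paper's separate Lemma~\ref{notcayleymin} (your subgroup $\{(b,b,c)\}$ is exactly their $M=\langle\psi_1,\psi_2\rangle$).
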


\begin{proof}
The statement of the lemma is obvious when $\mathcal{A}=\mathbb{Z}G$. If $|O_{\theta}(\mathcal{A})|=p^{n-1}$ then the statement of the lemma follows from Statement~$(2)$ of Proposition~\ref{thin}. If $|O_{\theta}(\mathcal{A})|<p^{n-1}$ then $n=3$ and $\mathcal{A}$ is Cayley isomorphic to one of the $S$-rings no.~3, 5, 6 from Table~$1$. If $\mathcal{A}$ is Cayley isomorphic to the $S$-ring no.~$6$ from Table~$1$ then Lemma~\ref{except} yields that $|\aut_G(\mathcal{A})|=p$. Since  $\mathcal{A}$ is cyclotomic and nontrivial, we conclude that $\mathcal{A}$ is Cayley minimal. 

Suppose that  $\mathcal{A}\cong_{Cay}\mathbb{Z}C_p\wr \mathbb{Z}C_p^2$ (the $S$-ring no. 3 from Table~$1$). Put $O_{\theta}(\mathcal{A})=\langle a \rangle=A$. Every basic set of $\mathcal{A}$ outside $A$ is an $A$-coset. Let $b,c\in G\setminus A$ such that $cA$ is a basic set outside $\langle b,A \rangle$. If $\varphi\in \aut_G(\mathcal{A})$ fixes $b$ and $c$ then $\varphi$ is trivial. So $|\aut_G(\mathcal{A})|\leq p^2$. On the other hand, the direct check implies that $\aut_G(\mathcal{A})$ contains the following subgroup
$$\{\varphi\in \aut(G):~(a,b,c)^{\varphi}=(a,ba^k,ca^l),~k,l=0,\ldots,p-1\}$$
of order $p^2$. Therefore, $|\aut_G(\mathcal{A})|=p^2$. Assume that $\mathcal{A}$ is not Cayley minimal. Then there exists a group $M\leq \aut_G(\mathcal{A})$ of order $p$ such that $\mathcal{A}=\cyc(M,G)$. Let $\psi$ be a generator of $M$ and $(a,b,c)^{\psi}=(a,ba^k,ca^l)$, where $(k,l)\neq (0,0)$. There exist $i,j\in\{0,\ldots,p-1\}$ such that $(i,j)\neq (0,0)$ and $ki+lj=0$. Then $(b^ic^j)^{\psi}=b^ic^ja^{ki+lj}=b^ic^j$ and hence $(b^ic^j)^{M}=\{b^ic^j\}$. Since $b^ic^jA\in \mathcal{S}(\mathcal{A})$, we obtain a contradiction with $\mathcal{A}=\cyc(M,G)$. Thus, $\mathcal{A}$ is Cayley minimal and the lemma is proved.
\end{proof}

\begin{lemm}\label{notcayleymin}
Let $\mathcal{A}\cong_{Cay}\mathbb{Z}C_p\wr \mathbb{Z}C_p\wr \mathbb{Z}C_p$. Then $|\aut_G(\mathcal{A})|=p^3$ and $\mathcal{A}$ is not Cayley minimal.
\end{lemm}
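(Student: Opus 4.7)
The plan is to fix a basis $G=\langle a\rangle\times\langle b\rangle\times\langle c\rangle$ compatible with the wreath filtration, setting $A=\langle a\rangle$ and $B=\langle a,b\rangle$. Under this identification the basic sets of $\mathcal{A}$ are the singletons $\{a^i\}$ inside $A$, the cosets $b^jA$ with $j\neq 0$ inside $B\setminus A$, and the cosets $c^kB$ with $k\neq 0$ outside $B$.

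First I would compute $\aut_G(\mathcal{A})$. A group automorphism $\varphi\in\aut(G)$ lies in $\aut_G(\mathcal{A})$ exactly when it fixes every basic set setwise. The singleton basic sets force $\varphi|_A=\id$; preservation of $bA$ then forces $\varphi(b)=ba^{\beta_1}$ for some $\beta_1\in\mF_p$; preservation of $cB$ forces $\varphi(c)=ca^{\gamma_1}b^{\gamma_2}$ for some $\gamma_1,\gamma_2\in\mF_p$. Conversely any triple $(\beta_1,\gamma_1,\gamma_2)\in\mF_p^3$ defines such an automorphism, so $\aut_G(\mathcal{A})$ coincides with the group of $3\times 3$ upper unitriangular matrices over $\mF_p$ with respect to the ordered basis $(a,b,c)$, and $|\aut_G(\mathcal{A})|=p^3$.

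To show that $\mathcal{A}$ is not Cayley minimal I would exhibit a proper subgroup $M<\aut_G(\mathcal{A})$ with the same orbits on $G$. A natural candidate is
$$M=\{\varphi_{(\beta_1,\gamma_1,\gamma_2)}\in\aut_G(\mathcal{A}):\gamma_2=\beta_1\},$$
which a short upper-unitriangular multiplication shows to be closed under composition, hence a subgroup of order $p^2$. For $j_0,k_0\in\mF_p\setminus\{0\}$ the generic element of $M$ sends $b^{j_0}\mapsto b^{j_0}a^{j_0\beta_1}$ and $c^{k_0}\mapsto c^{k_0}a^{k_0\gamma_1}b^{k_0\beta_1}$, so the $M$-orbit of $b^{j_0}$ sweeps out $b^{j_0}A$ as $\beta_1$ ranges over $\mF_p$, and the $M$-orbit of $c^{k_0}$ sweeps out $c^{k_0}B$ as $(\beta_1,\gamma_1)$ ranges over $\mF_p^2$. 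Since the orbits of $M$ refine those of $\aut_G(\mathcal{A})$ while already matching them in size, they coincide with $\mathcal{S}(\mathcal{A})$. This yields $M\approx_{\cay}\aut_G(\mathcal{A})$ with $|M|=p^2<p^3$, which establishes Cayley non-minimality.

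The only delicate point is the choice of linear constraint defining $M$: one needs a codimension-one subgroup of the Heisenberg group $\aut_G(\mathcal{A})$ whose projections onto the $\beta_1$-coordinate and onto the $(\gamma_1,\gamma_2)$-pair both remain surjective, so that the $b$- and $c$-orbits still sweep out $bA$ and $cB$ respectively. The relation $\gamma_2=\beta_1$ does exactly this while being compatible with the group law; any equivalent linear relation (for instance $\gamma_1=\beta_1$) would serve equally well.
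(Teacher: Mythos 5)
Your proof is correct and takes essentially the same approach as the paper: the same direct computation gives $|\aut_G(\mathcal{A})|=p^3$, and your subgroup $M=\{\varphi_{(\beta_1,\gamma_1,\gamma_2)}:\gamma_2=\beta_1\}$ is precisely the paper's $\langle\psi_1,\psi_2\rangle$ (your parameters $(1,0,1)$ and $(0,1,0)$), shown to have the basic sets as its orbits by the same coset-sweeping argument.
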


\begin{proof}
Let $O_{\theta}(\mathcal{A})=\langle a \rangle=A$ and  $b,c\in G\setminus A$ such that $bA\in \mathcal{S}(\mathcal{A})$ and  $c(A\times B)$ is a basic set of $\mathcal{A}$ outside $\langle b,A \rangle$. If $\varphi\in \aut_G(\mathcal{A})$ fixes $b$ and $c$ then $\varphi$ is trivial. So $|\aut_G(\mathcal{A})|\leq p^3$. The direct check yields that $\aut_G(\mathcal{A})$ contains the following subgroup
$$\{\varphi\in \aut(G):~(a,b,c)^{\varphi}=(a,ba^i,cb^ja^k),~i,j,k=0,\ldots,p-1\}$$ 
of order $p^3$. Therefore, $|\aut_G(\mathcal{A})|=p^3$. Define $\psi_1,\psi_2\in \aut(G)$ as follows:
$$(a,b,c)^{\psi_1}=(a,ba,cb),~(a,b,c)^{\psi_2}=(a,b,ca).$$
Put $M=\langle \psi_1,\psi_2\rangle$. It can be checked in the straightforward way  that $|M|=p^2$ and $\mathcal{A}=\cyc(M,G)$. Thus, $\mathcal{A}$ is not Cayley minimal. 
\end{proof}

\begin{prop}\label{reprove}
Let $n\leq 5$ and $\mathcal{A}$   be 
a cyclotomic $p$-$S$-ring  over $G$ such that $\mathcal{A}$ is the $S$-wreath product for some $\mathcal{A}$-section $S=U/L$ of $G$. Suppose that every elementary abelian group of  rank at most $n-1$ is a $DCI$-group. Then $\mathcal{A}$ is a $CI$-$S$-ring.
\end{prop}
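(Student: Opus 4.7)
The plan is to invoke Theorem~\ref{main}, whose hypotheses ask that $\mathcal{A}_U$ and $\mathcal{A}_{G/L}$ be $CI$-$S$-rings and that the factorization
$\aut_S(\mathcal{A}_S)=\aut_U(\mathcal{A}_U)^S\aut_{G/L}(\mathcal{A}_{G/L})^S$ hold.

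Since the $S$-wreath product is nontrivial, $|L|\geq p$ and $|U|\leq p^{n-1}$, so $U$ and $G/L$ are elementary abelian groups of rank at most $n-1\leq 4$, and both $\mathcal{A}_U$ and $\mathcal{A}_{G/L}$ are cyclotomic $p$-$S$-rings on these groups. I would establish their $CI$-property by induction on $n$: if the restriction is itself decomposable, the present proposition applies recursively; if it is indecomposable, then the classification in Lemma~\ref{rank3} together with the 2-minimality supplied by Lemmas~\ref{2min} and~\ref{rank4} reduces the claim to Propositions~\ref{trivial} and~\ref{min}. The hypothesis that every elementary abelian group of rank at most $n-1$ is a $DCI$-group, combined with the explicit small-rank analysis already developed earlier in the section, anchors the induction without circularity.

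To handle the factorization, I observe that $|S|=|U|/|L|\leq p^{n-2}\leq p^3$, so $\mathcal{A}_S$ is a cyclotomic $p$-$S$-ring of rank at most $3$. If $\mathcal{A}_S=\mathbb{Z}S$, Proposition~\ref{trivial} closes the argument. If $\mathcal{A}_S$ is nontrivial and Cayley minimal, Proposition~\ref{min} closes it. By Lemma~\ref{caymin}, the only case not covered is $\mathcal{A}_S\cong_{\cay}\mathbb{Z}C_p\wr\mathbb{Z}C_p\wr\mathbb{Z}C_p$, and since the rank of $S$ is forced to equal $3$, this occurs only for $n=5$, $|L|=p$, $|U|=p^4$.

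This exceptional case is the main obstacle. The iterated wreath structure of $\mathcal{A}_S$ supplies a chain $\{e\}<N<M<S$ of $\mathcal{A}_S$-subgroups with $|N|=p$ and $|M|=p^2$; pulling these back under the canonical epimorphism $\pi\colon U\to U/L$ yields a chain $L<N'<M'<U$ of $\mathcal{A}$-subgroups of~$G$. My plan is to exhibit an alternative wreath decomposition of $\mathcal{A}$ relative to a finer section whose quotient $S$-ring is Cayley minimal---ideally $M'/N'$, for which $\mathcal{A}_{M'/N'}\cong\mathbb{Z}C_p$---and then apply Proposition~\ref{min} to this refined decomposition. The delicate step, where the bulk of the work lies, is to verify the radical condition defining the refined wreath product: one must show $N'\leq\rad(X)$ for every basic set $X$ of $\mathcal{A}$ outside $M'$, whereas the original $U/L$-wreath hypothesis only delivers $L\leq\rad(X)$ for $X$ outside $U$. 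The extra radical containment would be extracted from the cyclotomic presentation $\mathcal{A}=\cyc(K,G)$ together with the explicit description of $\aut_G(\mathcal{A}_S)$ recorded in Lemma~\ref{notcayleymin}, by reading the requisite orbit equalities off the action of $K$ on the $L$-cosets in $G\setminus M'$.
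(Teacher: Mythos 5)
Your reduction to Theorem~1 is set up correctly, and the first two thirds of your argument track the paper: the $CI$-property of $\mathcal{A}_U$ and $\mathcal{A}_{G/L}$ comes from the $DCI$-hypothesis on groups of rank at most $n-1$, and since $|S|\leq p^3$, Lemma~\ref{caymin} combined with Propositions~\ref{trivial} and~\ref{min} disposes of every case except $n=5$, $|L|=p$, $|U|=p^4$, $\mathcal{A}_S\cong_{\cay}\mathbb{Z}C_p\wr\mathbb{Z}C_p\wr\mathbb{Z}C_p$. The problem is your treatment of that exceptional case. The refined $M'/N'$-wreath decomposition you propose (with $N'=\pi^{-1}(A)$ of order $p^2$ and $M'=\pi^{-1}(A\times B)$ of order $p^3$) simply does not exist in general, so the ``delicate step'' you defer is not merely delicate, it is false. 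Concretely, take $\mathcal{A}_{G/L}=\mathcal{A}_S\otimes\mathbb{Z}C_p$, which is a legitimate configuration here: it has a singleton basic set $\{x\}$ outside $S$, whose preimage in $G$ is a single $L$-coset, a basic set of $\mathcal{A}$ outside $U\supseteq M'$ whose radical is exactly $L$ of order $p$ and hence does not contain $N'$. Likewise, basic sets of $\mathcal{A}_U$ outside $M'$ are only guaranteed to project onto $(A\times B)$-cosets of $S$; nothing forces their radicals in $U$ to contain $N'$. No amount of reading orbit equalities off the cyclotomic presentation can rescue a containment that fails. (Your idea does work in one sub-case --- when every basic set of $\mathcal{A}_{G/L}$ outside $S$ has nontrivial radical, the paper passes to the coarser section $U/\pi^{-1}(A)$, whose quotient $S$-ring is $\mathbb{Z}C_p\wr\mathbb{Z}C_p$ and hence Cayley minimal --- but that is precisely the easy sub-case.)

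What the paper actually does in the exceptional case is verify Condition~(1) of Theorem~1 from the quotient side, i.e.\ show $\aut_{G/L}(\mathcal{A}_{G/L})^S=\aut_S(\mathcal{A}_S)$, rather than hunt for a better wreath decomposition. If $\mathcal{A}_{G/L}$ is indecomposable, it is $2$-minimal by Lemma~\ref{rank4} and one concludes via Lemma~\ref{quont}. If $\mathcal{A}_{G/L}$ is decomposable, one analyzes the basic sets $X$ of $\mathcal{A}_{G/L}$ outside $S$ with trivial radical, shows $|X|\in\{1,p\}$, and in each case exhibits enough elements of $\aut_{G/L}(\mathcal{A}_{G/L})$ restricting onto all $p^3$ elements of $\aut_S(\mathcal{A}_S)$ (Lemma~\ref{notcayleymin} supplies the count $|\aut_S(\mathcal{A}_S)|=p^3$). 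This structural analysis of $\mathcal{A}_{G/L}$ is the genuine content of the proof and is entirely missing from your proposal.
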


\begin{proof}
The $S$-rings $\mathcal{A}_U$ and $\mathcal{A}_{G/L}$ are $CI$-$S$-rings because every elementary abelian group of  rank at most $n-1$ is a $DCI$-group. Clearly, $\mathcal{A}_S$ is a cyclotomic $p$-$S$-ring. Since $n\leq 5$, we have  $|S|\leq p^3$. If $\mathcal{A}_S\ncong_{Cay}\mathbb{Z}C_p\wr \mathbb{Z}C_p\wr \mathbb{Z}C_p$ then $\mathcal{A}_S$ is Cayley minimal by Lemma~\ref{caymin} and hence $\mathcal{A}$ is a $CI$-$S$-ring by Proposition~\ref{min}. So we may assume that $n=5$ and $\mathcal{A}_S\cong_{Cay}\mathbb{Z}C_p\wr \mathbb{Z}C_p\wr \mathbb{Z}C_p$. In this case $|L|=p$ and $|U|=p^4$. If $\mathcal{A}_{G/L}$ is indecomposable then Lemma~\ref{rank4} implies that $\mathcal{A}_{G/L}$ is 2-minimal. So $\mathcal{A}$ is a $CI$-$S$-ring by Lemma~\ref{quont}. Thus, we may assume that $\mathcal{A}_{G/L}$ is decomposable, namely $\mathcal{A}_{G/L}$ is the $U_1/L_1$-wreath product, where $L_1$ is nontrivial and $U_1<G/L$.

Put  $O_{\theta}(\mathcal{A}_S)=\langle a \rangle=A$. Let $b\in S\setminus A$ such that $bA\in \mathcal{S}(\mathcal{A}_S)$, $B=\langle b \rangle$, and  $c\in S\setminus (A\times B)$ such that $c(A\times B)\in \mathcal{S}(\mathcal{A}_S)$. If every basic set of $\mathcal{A}_{G/L}$ outside $S$ has the nontrivial radical then $A$ is the smallest nontrivial $\mathcal{A}_{G/L}$-subgroup. So $A\leq \rad(X)$ for every $X\in \mathcal{S}(\mathcal{A}_{G/L})$ outside $A$. This implies that the group $\pi^{-1}(A)$, where $\pi:G\rightarrow G/L$ is the canonical epimorphism, lies in the radical of every basic set of $\mathcal{A}$ outside $U$. Therefore, $\mathcal{A}$ is the $U/\pi^{-1}(A)$-wreath product. Note that $|\pi^{-1}(A)|=p^2$ and hence $|U/\pi^{-1}(A)|=p^2$. The $S$-rings $\mathcal{A}_U$ and $\mathcal{A}_{G/\pi^{-1}(A)}$ are $CI$-$S$-rings  by assumption of the proposition. Lemma~\ref{caymin} yields that $\mathcal{A}_{U/\pi^{-1}(A)}$ is Cayley minimal and we are done by Proposition~\ref{min}. Thus, we may assume that there exists $X\in \mathcal{S}(\mathcal{A}_{G/L})$ outside $S$ with $|\rad(X)|=1$. 

Due to $|G/L|=p^4$, we have $|X|\in\{1,p,p^2,p^3\}$. Note that 
$$\langle X \rangle \leq U_1<G/L~\eqno(2)$$ 
because every basic set outside $U_1$ has %the 
nontrivial radical. Also 
$$|\langle X \rangle|>p|X|~\eqno(3)$$
whenever $|X|\geq p$ since otherwise $|\rad(X)|=|X|\geq p$ by Lemma~\ref{p2}. From Eqs.~$(2)$ and $(3)$ it follows that $|X|\in \{1,p\}$. Let $|X|=1$. In this case $X=\{x\}$ for some $x\in (G/L)\setminus S$. Lemma~\ref{tenspr} implies that $\mathcal{A}_{G/L}=\mathcal{A}_S\otimes \mathcal{A}_{\langle x \rangle}$. Let $\varphi\in \aut_S(\mathcal{A}_S)$. Define $\psi\in \aut(G/L)$ in the following way: $\psi^S=\varphi,~x^{\psi}=x$. Then $\psi\in \aut_{G/L}(\mathcal{A}_{G/L})$. We obtained that $\aut_{G/L}(\mathcal{A}_{G/L})^S\geq \aut_S(\mathcal{A}_S)$, and therefore, $\aut_{G/L}(\mathcal{A}_{G/L})^S=\aut_S(\mathcal{A}_S)$. So $\mathcal{A}$ is a $CI$-$S$-ring by Theorem~$1$. Furthermore, we may assume that there are no basic sets of size~$1$ outside $S$ and hence $O_{\theta}(\mathcal{A}_{G/L})=A$. In this case $A$ is the smallest nontrivial $\mathcal{A}_{G/L}$-subgroup.

Now let $|X|=p$. From Eqs.~$(2)$ and $(3)$ it follows that $|\langle X \rangle|=p^3$. The group $\langle X \rangle \cap S$ is an $\mathcal{A}_{G/L}$-subgroup of order $p^2$. On the other hand, $A\times B$ is the unique $\mathcal{A}_{G/L}$-subgroup of order $p^2$ in $S$. So $\langle X \rangle \cap S=A\times B$. In view of Lemma~\ref{rank3}, the $S$-ring $\mathcal{A}_{\langle X \rangle}$ is Cayley isomorphic to the $S$-ring no.~$6$ from Table~$1$. Therefore, we may assume that
$$X=x\{b^ia^{\frac{i(i-1)}{2}}\},~i=0,\ldots,p-1,$$
for some $x\in (G/L)\setminus S$. Let $Y\in \mathcal{S}(\mathcal{A}_{G/L})$ outside $\langle X \rangle \cup S$. Assume that $\rad(Y)$ is trivial. Then $X,Y\subseteq U_1$. However, $|\langle X,Y \rangle|>|\langle X \rangle|=p^3\geq |U_1|$, a contradiction. This yields that $A\leq \rad(Y)$. Let $\pi_1:G/L\rightarrow (G/L)/A$ be the canonical epimorphism. Consider the $S$-ring $\mathcal{A}_{(G/L)/A}$ over the group $(G/L)/A$ of order $p^3$. Note that $|\pi_1(X)|=|\pi_1(c(A\times B))|=p$ and $\rad(\pi_1(X))=\rad(\pi_1(c(A\times B)))=\pi_1(B)$. The description of all $p$-$S$-rings over $C_p^3$ given in Table~$1$ implies that $\mathcal{A}_{(G/L)/A}$ is Cayley isomorphic to $\mathbb{Z}C_p\wr \mathbb{Z}C_p^2$. So $|\pi_1(Y)|=p$ and $\rad(\pi_1(Y))=\pi_1(B)$. Since $A\leq \rad(Y)$, we conclude that $Y$ is an 
%$A\times B$-coset
 $(A\times B)$-coset. 
 Thus, we proved that every basic set %of 
 in $\mathcal{S}(\mathcal{A}_{G/L})$ outside 
%$\langle X \rangle \cup U/L$ 
 $\langle X \rangle=U_1$
is an %$A\times B$-coset
$(A\times B)$-coset. The direct check shows that $\aut_{G/L}(\mathcal{A}_{G/L})$ contains the following subgroup:
$$M=\{\varphi\in \aut(G/L):~(a,b,c,x)^{\varphi}=(a,ba^i,ca^jb^k,xb^ia^{\frac{i(i-1)}{2}}),i,j,k=0,\ldots,p-1\}.$$
Therefore, $|\aut_{G/L}(\mathcal{A}_{G/L})|\geq |M|=p^3$. 

Suppose that $\varphi\in \aut_{G/L}(\mathcal{A}_{G/L})$ acts trivially on $S$. If $\varphi$ is nontrivial then from Statement~$(1)$ of Proposition~\ref{thin} it follows that $\cyc(\langle \varphi \rangle, G/L)$ is the generalized wreath product of two group rings. But this is impossible because  $\rad(X)$ is trivial. So $\varphi$ is trivial  and hence $|\aut_{G/L}(\mathcal{A}_{G/L})^S|=|\aut_{G/L}(\mathcal{A}_{G/L})|$. Using this and  Lemma~\ref{notcayleymin}, we conclude that $|\aut_{G/L}(\mathcal{A}_{G/L})^S|\geq p^3=|\aut_S(\mathcal{A}_S)|$. Thus, $\aut_{G/L}(\mathcal{A}_{G/L})^S=\aut_S(\mathcal{A}_S)$ and $\mathcal{A}$ is a $CI$-$S$-ring by Theorem~\ref{main}.
\end{proof}

Now from~\cite[Theorem~1.3]{CLi} and Proposition~\ref{reprove} it follows that every decomposable cyclotomic  $q$-$S$-ring over an elementary abelian group of rank at most~5 is a $CI$-$S$-ring  for any prime number $q$.

%\begin{rem}
%In fact, above we reprove \cite[Proposition 3.15]{HM} and \cite[Theorem 5.2]{FK}.
%\end{rem}

\begin{prop}\label{reprove1}
Let $n=6$ and $\mathcal{A}$   be a cyclotomic $p$-$S$-ring over $G$ such that $\mathcal{A}$ is the $S$-wreath product for some $\mathcal{A}$-section $S=U/L$ 
of $G$. Suppose that $\mathcal{A}_S$ is indecomposable. Then $\mathcal{A}$ is a $CI$-$S$-ring.
\end{prop}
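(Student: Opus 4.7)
The plan is to verify the hypotheses of Proposition~\ref{min} and conclude directly from it. Since $\mathcal{A}$ is cyclotomic by assumption, Proposition~\ref{min} requires two things: (i) that $\mathcal{A}_U$ and $\mathcal{A}_{G/L}$ already be $CI$-$S$-rings (this is the standing assumption in the paragraph preceding Proposition~\ref{trivial} in which Proposition~\ref{min} is formulated), and (ii) that $\mathcal{A}_S$ be $2$-minimal or Cayley minimal.

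For (i), observe that because the $S$-wreath product is nontrivial we have $|L| \geq p$ and $|U| \leq p^5$, so both $U$ and $G/L$ are elementary abelian of rank at most~$5$. As a restriction and a quotient of the cyclotomic $S$-ring $\mathcal{A}$, the rings $\mathcal{A}_U$ and $\mathcal{A}_{G/L}$ are themselves cyclotomic $p$-$S$-rings. The proofs that $C_p^4$ and $C_p^5$ are $DCI$-groups, given in~\cite{HM} and~\cite[Theorem~5.2]{FK}, in fact establish that every cyclotomic $p$-$S$-ring over an elementary abelian group of rank at most~$5$ is a $CI$-$S$-ring, so (i) holds.

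For (ii), note that $|S|=|U|/|L|\leq p^4$, so the rank of $S$ is at most~$4$. The quotient $\mathcal{A}_S$ is cyclotomic, hence schurian, and it is indecomposable by hypothesis. If the rank of $S$ is at most~$3$, Lemma~\ref{2min} yields that $\mathcal{A}_S$ is $2$-minimal; if the rank equals~$4$, the same conclusion follows from Lemma~\ref{rank4}. Applying Proposition~\ref{min} to $\mathcal{A}$ then gives that $\mathcal{A}$ is a $CI$-$S$-ring.

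There is no real obstacle: the whole point of combining indecomposability of $\mathcal{A}_S$ with the numerical constraint $n=6$ is precisely to confine $\mathcal{A}_S$ to ranks for which $2$-minimality of indecomposable schurian $p$-$S$-rings is already on record. The argument would break down without the indecomposability hypothesis (one would have to treat decomposable $\mathcal{A}_S$ of rank~$3$ or~$4$ by hand, as was done in the proof of Proposition~\ref{reprove} for $n\leq 5$), and it would also break down for $n=7$, where $|S|$ could reach~$p^5$ and the appropriate analogue of Lemma~\ref{rank4} is not available.
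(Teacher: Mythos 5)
Your proposal is correct and follows essentially the same route as the paper: both verify the hypotheses of Proposition~\ref{min} by noting that $\mathcal{A}_U$ and $\mathcal{A}_{G/L}$ are $CI$-$S$-rings (since the relevant elementary abelian groups of rank at most~$5$ are handled by~\cite{HM,FK}), that $|S|\leq p^4$, and that the indecomposable schurian $p$-$S$-ring $\mathcal{A}_S$ is $2$-minimal by Lemmas~\ref{2min} and~\ref{rank4}. Your write-up is in fact slightly more careful than the paper's (e.g., making explicit that cyclotomic implies schurian so that Lemma~\ref{rank4} applies), but there is no substantive difference in approach.
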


\begin{proof}
Since the group $C_p^k$ is a $DCI$-group for $k\leq 5$,  the $S$-rings $\mathcal{A}_U$ and $\mathcal{A}_{G/L}$ are $CI$-$S$-rings. Note that $|S|\leq p^4$ because $n=6$. From Lemma~\ref{2min} and Lemma~\ref{rank4} it follows that $\mathcal{A}_S$ is 2-minimal. Therefore, $\mathcal{A}$ is a $CI$-$S$-ring by Proposition~\ref{min}.
\end{proof}
 
%We finish the paper with the following question.

%\begin{ques}
%Is it true that every decomposable cyclotomic $S$-ring over $G=C_p^6$, where $p$ is an odd prime, is a $CI$-$S$-ring?
%\end{ques}

\end{document}